\newcommand{\GL}{\mathrm{GL}}
\newcommand{\SL}{\mathrm{SL}}
\newcommand{\Z}{\mathbb{Z}}
\newcommand{\R}{\mathbb{R}}
\renewcommand{\d}{\mathrm{d}}
\newcommand{\dd}{\mathrm{d}}
\newcommand{\sums}{\ \sideset{}{^*}\sum}
\renewcommand{\leq}{\leqslant}
\renewcommand{\geq}{\geqslant}
\renewcommand{\mod}{\operatorname{mod}}
\renewcommand{\pmod}{\mod}
\renewcommand{\Re}{\operatorname{Re}}
\newcommand{\Nmain}{\mathcal{N}_{\text{main}}}
\newcommand{\Ncomp}{\mathcal{N}_{\text{comp}}}
\newcommand{\Smain}{S_{\text{main}}(N)}
\newcommand{\Smainprime}{S_{\text{main}}'}
\newcommand{\Scomp}{S_{\text{comp}}(N)}
\newtheorem{lemma}{Lemma}
\newtheorem{theorem}{Theorem}
\title{Subconvexity for twisted $\GL_3$ L-functions}
\author{Eren Mehmet Kıral\textsuperscript{1}, Chan Ieong Kuan\textsuperscript{2} and Didier Lesesvre}
\date{\today}
\thanks{1. The first-named author is a Special Postdoctoral Researcher (SPDR) at RIKEN. Part of this work was supported by Grant-in-Aid for JSPS Research Fellow 18F18326.}
\thanks{2. The second-named author is supported in part by NSFC (No.11901585).}
\keywords{Circle Method, Subconvexity, $GL(3)$ automorphic $L$-functions.}
\subjclass[2020]{11F66 (primary).} 
\address{\newline RIKEN Advanced Intelligence Project (AIP), Mathematical Sciences Team\newline
Special Postdoctoral Researchers Program
 \newline
Nihonbashi 1-chome Mitsui Building, 15th floor,\newline
1-4-1 Nihonbashi, Chuo-ku, Tokyo, 103-0027, Japan}
\email{erenmehmetkiral@protonmail.com}
\address{\newline School of mathematics (Zhuhai) \newline
Zhuhai Campus, Sun Yat-Sen University \newline
Tangjiawan, Zhuhai, Guangdong, 519082, China (PRC)}
\email{didier@mail.sysu.edu.cn, lesesvre@math.cnrs.fr}
\email{kuanchi3@mail.sysu.edu.cn}
\numberwithin{equation}{section}
\begin{document}

\begin{abstract}
Using the circle method, we obtain subconvex bounds for $\GL_3$ $L$-functions twisted by a character~$\chi$ modulo a prime $p$, hybrid in the $t$ and $p$-aspects.
\end{abstract}

\maketitle 

\section{Introduction}
\label{sec:introduction}

\subsection{Statement of Results}
\label{sec:introduction-results}

Let $\pi$ be a Hecke-Maass form for $\SL_3(\Z)$ and $\chi$ a primitive character modulo a {prime} $p$. If $p=1$, then $\chi$ is the trivial character. The twisted $L$-function is defined by
\begin{equation}
	L(s, \pi \times \chi) = \sum_{n=1}^\infty \frac{\lambda(1,n) \chi(n)}{n^s}
\end{equation}
for $\Re(s)>1$. This function has an analytic continuation to the entire plane, and satisfies a functional equation. The analytic conductor is asymptotically of size $c(t,\pi \times \chi)~\asymp~(pt)^3$, \emph{for a fixed~$\pi$}, see for example \cite{iwaniec_perspectives_2000}. We are interested in bounds of the form
\begin{equation}\label{eq:subconvexityalpha}
L\left(\tfrac{1}{2}+it, \pi\times\chi\right) \ll_\varepsilon (pt)^{\frac{3}{4}-\delta + \varepsilon}
\end{equation}
for a certain $0 \leq \delta \leq \tfrac{3}{4}$ and for any $\varepsilon>0$. The bound with $\delta = 0$ follows from the functional equation and the Phragm\'{e}n-Lindel\"{o}f convexity principle for all automorphic $L$-functions \cite{iwaniec_analytic_2004} and is called the convexity bound. The Lindelöf hypothesis is the statement with $\delta = \frac{3}{4}$. Any improvement $\delta > 0$ is called a subconvex bound; particularly challenging milestones are the Burgess-type bound $\delta = \tfrac{3}{16}$ and the Weyl-type bound $\delta = \frac{1}{4}$ giving bounds \eqref{eq:subconvexityalpha} which are respectively three-fourths and two-thirds of the convexity exponent. In this paper we show the following theorem.

\begin{theorem}
\label{theorem}
Let $\pi$ be a Hecke-Maass form for $\SL_3(\Z)$ and $\chi$ a primitive character modulo a prime $p$. Assume $p<t^{{8/7}}$.
The following subconvex bound holds:
\begin{equation}
L\left(\tfrac{1}{2}+it, \pi\times\chi\right) \ll_\varepsilon (pt)^{\frac{3}{4} - {\frac{3}{40}} + \varepsilon}.
\end{equation}
\end{theorem}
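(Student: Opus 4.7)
The plan is to attack $L(\tfrac12+it,\pi\times\chi)$ by a Munshi-style circle method, in which the $\GL_3$ Voronoi formula, Poisson summation against the character, and a final Cauchy--Schwarz are combined via a conductor-lowering delta symbol.

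First, the approximate functional equation together with a smooth dyadic partition of unity reduces the proof to obtaining, for all $N \leq (pt)^{3/2+\varepsilon}$, a bound of the shape
$$S(N) := \sum_n \lambda(1,n)\,\chi(n)\,n^{-it}\,V(n/N) \ \ll \ N\,(pt)^{-3/40+\varepsilon}$$
for a fixed bump $V$. The critical range is $N\asymp (pt)^{3/2}$, where the trivial estimate coming from Ramanujan on average is $|S(N)|\ll N^{1+\varepsilon}$, so the objective is to save $(pt)^{3/40}$ over that bound. I would then introduce an auxiliary variable $m$ of the same size and detect $n=m$ via a DFI-type delta symbol with moduli of the form $pq$, $q\leq Q$, for a parameter $Q$ to be optimized. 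The choice $pq$ (rather than $q$) performs a \emph{conductor lowering}: the character $\chi$, originally of modulus $p$, is absorbed into an additive character modulo $pq$, leaving room for nontrivial summation on both sides. Up to negligible error, one is reduced to bounding an average
$$\frac{1}{pQ}\sum_{q\leq Q}\frac{1}{q}\ \sums_{a\bmod pq}\ \int_\R\ \Bigl(\sum_n \lambda(1,n)\,e\Bigl(\tfrac{an}{pq}\Bigr)n^{-it}\cdots\Bigr)\Bigl(\sum_m \chi(m)\,e\Bigl(-\tfrac{am}{pq}\Bigr)m^{it}\cdots\Bigr).$$

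The next step is to dualise both inner sums. The $m$-sum is a product of $\chi$, an additive character, and an archimedean oscillation, and is treated by classical Poisson summation: since $p$ is prime, the resulting character sum is essentially a Gauss sum, and one obtains a dual sum of length $\asymp pqQ/N$. The $n$-sum, carrying the $\GL_3$ Hecke eigenvalues, is treated by the $\GL_3$ Voronoi formula, producing a dual sum of length $\asymp (pq)^3/N$ with Kloosterman-type coefficients. Stationary-phase analysis of the $t$-aspect integrals pins down the support of both dual sums and converts the oscillatory integrals into explicit prefactors, leaving a multiple sum over $(q,a,n^*,m^*)$ with fully explicit character and Kloosterman data.

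The decisive step is Cauchy--Schwarz, applied in the $(q,m^*)$ variables after dropping $\lambda(n^*,1)$ via $|\lambda|^2$ and the Rankin--Selberg bound on average. Opening the square splits the sum into a diagonal contribution, which yields the target bound up to acceptable losses, and an off-diagonal contribution that takes the form of a short shifted convolution of Kloosterman sums weighted by oscillatory archimedean factors. An additional round of Poisson summation in $n^*$ extracts square-root cancellation from the off-diagonal; balancing the diagonal and off-diagonal contributions against the delta-method parameter $Q$ yields the optimal choice and the announced saving $(pt)^{3/40}$. The hypothesis $p<t^{8/7}$ appears precisely at this optimisation step: it ensures that the critical points of the archimedean integrals lie in the interior of the summation range and that the lengths of the Poisson and Voronoi duals remain compatible with the Cauchy--Schwarz step. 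The principal obstacle I foresee is this final Cauchy--Schwarz and the off-diagonal analysis in the hybrid setting, for each oscillatory integral depends jointly on $p$ and $t$, so stationary-phase must be run uniformly in both aspects and without logarithmic losses across all ranges of $q$, $n^*$, $m^*$; it is here that the regime $p<t^{8/7}$ is exactly the one in which the balance between the $\GL_3$ Voronoi and Poisson outputs still favours the method.
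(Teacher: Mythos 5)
Your architecture is right in outline (approximate functional equation, auxiliary $m$-variable, DFI delta, Poisson on the $\chi$-side, $\GL_3$ Vorono\"{i} on the $\lambda$-side, Cauchy--Schwarz followed by a second Poisson, then optimize), but you have misidentified the mechanism the paper calls ``conductor lowering,'' and that mechanism is exactly what produces the exponent $3/40$. The paper's conductor lowering is the $v$-integral trick of Munshi: one inserts
$\frac{1}{K}\int_{\R} V(v/K)\,(n/m)^{iv}\,\d v$
into $S(N)$ \emph{before} opening $\delta_{n=m}$. Lemma~\ref{lem:conductor-lowering} then shows this restricts the shift $h=n-m$ to $|h|\ll_\varepsilon N/K$, which is what allows the delta symbol to be taken with moduli only up to $Q=\sqrt{N/K}$ rather than $\sqrt{N}$. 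The free parameter in the final optimization is $K$ (one takes $K=(pt)^{2/5}$), with $Q$ determined by $K$; the admissibility constraint $K\ll\min\{\sqrt{pt},t\}$ together with the comparison of the zero-frequency and small-moduli contributions is what produces the hypothesis $p<t^{8/7}$. None of this appears in your plan.

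What you propose instead --- running the DFI delta with moduli $pq$ so that $\chi$ is absorbed into an additive character modulo $pq$ --- is a different idea (closer to Munshi's $\chi$-aspect paper, which uses a Petersson-based expansion with moduli divisible by $p$), and it is not what this paper does: here the delta is run with plain moduli $q\le Q$, and $p$ only enters when Poisson is applied to the $\mathcal{M}$-sum modulo $pq$. Your variant doesn't shorten the delta range at all; on the contrary, enlarging the moduli to $pq$ lengthens the Vorono\"{i} dual to $\asymp(pq)^3/N$, and you have supplied no substitute mechanism to recover the $1/K$-shortening that drives the saving. As written, with a single parameter $Q$ and no $K$, the optimization you sketch cannot reach $\delta=3/40$ in the $t$-aspect (you would land closer to earlier exponents such as $1/16$ or $1/36$), and the constraint $p<t^{8/7}$ would not emerge from the balance you describe. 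You should reinstate the $v$-integral and the parameter $K$ as a separate, prior step, keep the delta moduli equal to $q\le\sqrt{N/K}$, and only bring in the modulus $pq$ at the Poisson step on the character side.
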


\noindent For $p=1$ this {exponent matches the recent result of \cite{aggarwal2019new} which improved} upon the then best known subconvexity bound in the $t$-aspect, due to Munshi~\cite{munshi_circle_2014} with $\delta = \frac{1}{16}$. It also improves,  in the allowed range for $(t,p)$, the best known subconvexity bound in the hybrid $(t,p)$-aspect, due to Lin \cite{lin_bounds_2020} with $\delta = \frac{1}{36}$.  % If we just look at the $\chi$-aspect of the above bound, it is an improvement over the result of Holowinsky-Nelson \cite{holowinsky_subconvex_2018}, where $\delta = \frac{1}{36}$.

\subsection{Previous Results}

The literature on the subconvexity problem is rich. We start with mentioning some results in the $\GL_1$ and $\GL_2$ cases for the purpose of assessing the strength of current results for $\GL_3$ $L$-functions. The Weyl-type bound has  been reached in full generality for $\GL_1$ $L$-functions \cite{petrow_fourth_2019} in the hybrid $(t, \chi)$-aspect. Subconvexity has been established in all aspects simultaneously for $\GL_2$ $L$-functions by Michel and Venkatesh \cite{michel_subconvexity_2010} with unspecified exponent, recently determined as $\delta = \tfrac{1}{128}$ by Wu \cite{wu_explicit_2021}. For $(t,\chi)$-aspect, the Burgess-type subconvex bound has been reached on both aspects simultaneously by Wu \cite{Wu_Burgess_2014} conditionally on the Ramanujan  hypothesis. For holomorphic cusp forms, the Burgess-type subconvex bound in $\chi$-aspect and Weyl-type subconvex bound in $t$-aspect has been achieved simultaneously by the second author \cite{Kuan_hybrid_2018}, also conditionally.  

In the case of $\GL_3$ self-dual $L$-functions, Li \cite{li_bounds_2011} achieved subconvexity with $\delta = \frac{1}{16}$ in the $t$-aspect via proving a first moment result for a family of $\GL_3 \times \GL_2$ $L$-functions. The proof relies on positivity of the central values of the $L$-functions involved, allowing one to deduce bounds for a single $L$-function by dropping all but one term. Since then, there are quite a few results based on this approach and its amplified variants. We list a few here: Blomer~\cite{blomer_subconvexity_2012}  achieved subconvexity in the $\chi$-aspect with $\delta =  \frac{1}{8}$ for quadratic characters and Nunes~\cite{nunes_subconvexity_2017} did so in the $t$-aspect with $\delta =  \frac{1}{8}$. For $(t,\chi)$-aspect, a similar approach led Huang \cite{huang_hybrid_2021} to reach subconvexity with $\delta = \tfrac{1}{46}$. 

Self-duality is not a common feature among $\GL_3$ $L$-functions, so it is desirable to remove this condition. Munshi has successfully done so by deploying circle method, where he starts from a single $L$-function instead of a moment. He achieves subconvexity in the $t$-aspect in~\cite{munshi_circle_2014} with $\delta = \frac{1}{16}$ and this exponent has been recently improved by Aggarwal \cite{aggarwal2019new} to $\delta=\tfrac{3}{40}$. Using the $\GL_2$ Petersson trace formulas as an expansion for the delta-symbol, he also succeeded in establishing $\delta = \frac{1}{308}$ for the $\chi$-aspect in \cite{munshi_twists_2016}. Inspired by this work, Holowinsky and Nelson wrote the character as a weighted sum of exponentials and Kloosterman sums obtaining $\delta = \frac{1}{36}$ in the $\chi$-aspect in \cite{holowinsky_subconvex_2018}. With this methodology, afterwards Lin gave a hybrid subconvex bound in the $(t,\chi)$-aspect with $\delta = \frac{1}{36}$  in \cite{lin_bounds_2020}, however failing to reach the exponent $\delta = \tfrac{3}{40}$. 

In this paper, we take upon a suggestion made by Munshi in \cite{munshi_subconvexity_2018} that simplifies the treatment of certain oscillatory integrals in \cite{munshi_circle_2014}. We also consider a character twist which ended up interacting delicately with the $t$-aspect circle method, \textit{ibid}. {We improve the best known bound in the joint $(t,\chi)$-aspect and achieve $\delta = \tfrac{3}{40}$, in a restricted range for~$(t, p)$.

\smallskip

\noindent \textit{Remark.} During preparation of this manuscript we came across Huang and Xu's very recent work~\cite{huang2021hybridnew}. They prove our Theorem \ref{theorem} with the same $t$ aspect savings of $\delta_t = -3/40$ but with a worse~$p$ aspect savings of $\delta_p = -1/32$. Their proof does not use the conductor lowering trick, but a method called \emph{mass transform}. 

\subsection*{Structure of the paper}

In Section \ref{sec:approximate-functional-equation} we are left with estimating a finite sum after using the approximate functional equation. In Section \ref{sec:conductor-lowering},  we apply the conductor lowering trick of Munshi as in \cite{munshi_subconvexity_2018} in order to reduce  the range of the variables introduced by the delta method. We next apply the delta method in Section \ref{sec:delta} and decouple the oscillations of the~$\GL_3$ automorphic coefficients $\lambda(1,n)$ from those of $\chi(n)n^{-it}$. 

Once these oscillations are separated, we apply the Poisson summation formula on the $\GL_1$-sum in Section \ref{sec:GL1-poisson} and the $\GL_3$-Vorono\"{i} formula on the $\GL_3$-sum in Section~\ref{sec:GL3-poisson}. The integral transforms appearing in these two summation formulas feature a common variable, coming from the particular form of the delta method used. This is the point that allows for a simplified treatment of the integrals appearing in \cite{munshi_circle_2014}; it is carried out in Section~\ref{subsec:x-integral}. 

After an application of Cauchy-Schwarz inequality and Poisson formula in Section \ref{sec:cauchy}, the estimates boil down to estimating an arithmetic and an analytic parts. The analytic part consists in various oscillatory integrals and is taken care of by the stationary phase method in Section \ref{sec:analytic} while the arithmetic part is taken care of in Section \ref{sec:arithmetic}. These bounds and an optimization in the conductor lowering parameter then allows to reach the subconvexity result in Section~\ref{subsec:subconvexity}.

\subsection*{Notations}

We use the following usual notations: $n \sim N$ if $n \in (N,2N)$, $f \ll g$ if there is a constant $C>0$ such that $|f| \leqslant C |g|$, and $f \asymp g$ if $f \ll g$ and $g \ll f$. All these asymptotic relations are relative to $t \to \infty$ and $p \to \infty$. Also, as is common in the literature, our use of $\varepsilon$ is fluid. It refers to an arbitrarily small positive exponent but may change from line to line. We let $f \ll_\varepsilon g$ if $f \ll (pt)^\varepsilon g$, and accordingly for $\asymp_\varepsilon$. Since the subconvexity bounds are of the form \eqref{eq:subconvexityalpha} and always feature an arbitrarily small exponent $\varepsilon$, this convention is natural and lightens notations considerably. We afford to drop the subscript in indices of summations.

\subsection*{Acknowledgments} We are grateful to Ritabrata Munshi for enlightening discussions.

\section{Setting}

\subsection{Approximate functional equation}
\label{sec:approximate-functional-equation}

The approximate functional equation \cite[Theorem 5.3]{iwaniec_analytic_2004} followed by standard calculations leads to
\begin{equation}
	L(\tfrac{1}{2} + it, \pi \times \chi) \ll_\varepsilon  \sup_{N \ll (pt)^{3/2}} \frac{S(N)}{N^{1/2}} + O_A((pt)^{-A}),
\end{equation}
for any $A>0$. Here
\begin{equation}\label{eq:SNdefinition}
	S(N) = \sum_{n =1}^\infty \lambda(1, n) n^{-it} \chi(n)  W\left(\frac{n}{N}\right)
\end{equation}
with $W$ a fixed smooth nonnegative bump function supported in $[1,2]$ and $\lambda(1,n)$ are the Fourier-Hecke coefficients of the $\GL_3$ form $\pi$. Assuming Lindel\"of on average, applying Cauchy-Schwarz and then bounding trivially, we have $S(N) \ll_\varepsilon N \ll_\varepsilon N^{1/2} (pt)^{3/4}$, which corresponds to the convexity bound. Therefore our goal amounts to obtaining any extra savings on this trivial bound at this point. 

\subsection{Conductor lowering}
\label{sec:conductor-lowering}

Let us formally separate the $\GL_1$ and the $\GL_3$ oscillations: 
\begin{equation}
	S(N) = \sum_{n,m = 1}^\infty \delta_{n=m} \lambda (1,n) m^{-it} \chi(m) U\left(\frac{n}{N} \right) U\left(\frac{m}{N}\right) .
\end{equation}
Here $U = W^{\frac{1}{2}}$, which is again a smooth bump function supported in $[1,2]$. 
The circle method is an analytic expansion of the $n=m$ condition, which is valid only if $n-m$ is in a restricted range \cite{munshi_subconvexity_2018}. It will turn out to be essential in our final bound if we can even further restrict this gap, after \emph{opening up} $\delta_{n=m}$ via the circle method.

For this goal we now introduce Munshi's \textit{conductor lowering} procedure. Let $V$ be a function supported in $[1,2]$ such that $\int_{\R}V(v) \d v = 1$. Let $K>1$ be a parameter to be determined later. Then we write 
\begin{equation}
\label{eq:S(N)-original}
	S(N) =  \sum_{n, m = 1}^\infty  \delta_{n = m} \lambda(1,n)  m^{-it} \chi(m) U\left(\frac{n}{N} \right) U\left(\frac{m}{N}\right) \frac{1}{K} \int_{\R} V\left(\frac{v}{K} \right) \left(\frac{n}{m}\right)^{iv}  \d v .
\end{equation}
The innermost integral in $v$, taken separately from the $n = m$ condition, ensures that $|n - m| \ll_\varepsilon N/K$. This is the content of the following lemma. 

\begin{lemma}
\label{lem:conductor-lowering}
	If $n,m\sim N$ and $V$ is a smooth function supported in $[1,2]$, then we have that
\begin{equation}
		\frac{1}{K} \int_{\R} V\left(\frac{v}{K} \right) \left(\frac{n}{m}\right)^{iv}  \d v \ll_A \left(\frac{K |n - m| }{m}\right)^{-A} 
\end{equation}

\noindent for any $A > 0$. Thus unless $|n- m| \ll_\varepsilon  \frac{N}{K}$ we have that the given integral is $O_A(t^{-A})$ for any $A>0$.  
\end{lemma}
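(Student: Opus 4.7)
The plan is a standard non-stationary phase argument via repeated integration by parts. First I would change variables $u = v/K$ so that the integral becomes
\[
\frac{1}{K}\int_{\R} V\!\left(\frac{v}{K}\right)\!\left(\frac{n}{m}\right)^{iv} dv
= \int_{\R} V(u)\, e^{iKu\,\log(n/m)}\, du,
\]
with the phase $\Phi(u) = Ku\log(n/m)$ linear in $u$. Its derivative is the constant $\Phi'(u) = K\log(n/m)$, and since $n,m\sim N$ the elementary inequality $|\log(n/m)| \asymp |n-m|/m$ holds.

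Next I would dispose of the trivial regime $K|n-m|/m \ll 1$, where the right-hand side of the stated bound exceeds a constant and the estimate follows simply from $|V|\ll 1$ together with the compact support of $V$. So I may assume $K|n-m|/m \gg 1$, and in particular $n \neq m$.

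In this regime I would integrate by parts $A$ times against the oscillating factor $e^{iKu\log(n/m)}$, differentiating the smooth compactly supported function $V(u)$ and integrating the exponential. Each integration by parts produces a factor of $(iK\log(n/m))^{-1}$ with no boundary contribution (since $V$ has compact support in $[1,2]$), while the derivatives $V^{(j)}(u)$ remain uniformly bounded. After $A$ iterations one obtains
\[
\int_{\R} V(u)\, e^{iKu\,\log(n/m)}\, du \ll_A \bigl(K\,|\log(n/m)|\bigr)^{-A} \ll_A \left(\frac{K|n-m|}{m}\right)^{-A},
\]
which is the claimed bound. The last assertion of the lemma follows by choosing $A$ large enough relative to the savings $(pt)^{\varepsilon}$: unless $|n-m|\ll_\varepsilon N/K$, the quantity $K|n-m|/m$ grows like a positive power of $(pt)$, and the integral decays faster than any power of $t$.

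There is no genuine obstacle here; the only mild point to keep in mind is separating the trivial regime $K|n-m|/m \lesssim 1$ (where integration by parts yields nothing) from the oscillatory regime. Since $\Phi$ is affine in $u$, no stationary phase issues arise and the derivatives of the inverse phase do not complicate the bookkeeping.
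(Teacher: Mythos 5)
Your proof is correct and follows essentially the same route as the paper's: rescale $v \mapsto Ku$, note the phase is linear with derivative $K\log(n/m) \asymp K|n-m|/m$ since $n,m\sim N$, and integrate by parts repeatedly against the exponential. The explicit separation of the non-oscillatory regime $K|n-m|/m \ll 1$ is a harmless (and slightly more careful) addition, not a departure from the paper's argument.
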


\begin{proof}
	Let us assume $n > m$ by exchanging $n$ and $m$ if necessary. Then by a change of variables the integral is given by
\begin{equation}
		 \int_{\R} V(v) e^{ \pm i v K \log \left( 1 + \tfrac{n-m}{m}\right)} \d v.
\end{equation}
Let us call $h: = n-m$. After integration by parts $k$ times, we see that this integral is asymptotically bounded by 
\begin{equation}
		(K \log (1 + \tfrac{h}{m}))^{-k}. 
\end{equation}\smallskip
Note that $\tfrac{h}{m} \in [0,1]$, and in this region $\log(1 + \tfrac{h}{m}) \asymp \tfrac{h}{m}$. Thus if $K \log ( 1 + \tfrac{h}{m}) \approx \tfrac{Kh}{m}  \gg (p t)^{\varepsilon}$, then the integral is bounded by $O_k(t^{-k\epsilon})$ for all integer $k$. Otherwise  $\tfrac{Kh}{m} \ll (pt)^{\varepsilon}$, i.e.
\begin{equation}
		h \ll_\varepsilon \frac{m}{K}  \asymp \frac{N}{K} .
\end{equation}\vspace{-0.2cm}
This finishes the proof of the lemma.
\end{proof}

Let us now assume 
\begin{equation}
K \ll (pt)^{-\varepsilon} \min\left.\{\sqrt{pt}, t \right\}.
\end{equation} This allows us to ignore certain terms, but is also the reason why Theorem~\ref{theorem} displays a restricted range for $(t,p)$.

\subsection{Delta method}
\label{sec:delta}

We analytically separate the $\lambda(1,n)$ from the $\chi(n)n^{-it}$ using the delta method \cite{munshi_subconvexity_2018} which we use in the form\smallskip
\begin{equation} \label{eq:delta}
	\delta_{h = 0} = \frac{1}{Q} \sum_{q \leqslant Q} \,\, \frac{1}{q} \: \sums_{a \text{ mod }q } e\left(\frac{ha}{q}\right) \int_{\R} g(q,x) e \left(\frac{hx}{qQ}\right) \mathrm{d} x,
\end{equation}\smallskip
valid in the region $h \in [-\tfrac{Q^2}{2}, \tfrac{Q^2}{2}]$. Here comes the importance of restricting the range of the shift variable $h:= n-m$, we now can choose $Q = \sqrt{N/K}$ (instead of $\sqrt{N}$). The function $g(q,x)$ is bounded and satisfies\smallskip
\begin{equation}
	g(q,x) \ll_A |x|^{-A}
\end{equation}
\vspace{-0.2cm}

\noindent for any $A>1$, see \cite[(6)]{munshi_subconvexity_2018}. So for our purposes we can consider the $x$-integral essentially in the bounded interval $|x|\ll_\varepsilon 1$.
The sums and integrals in \eqref{eq:S(N)-original} can be interchanged and\smallskip
\begin{equation}
\label{eq:summary1}
S(N) = \frac{1}{KQ} \int_{\R} \int_{\R} V\left(\frac{v}{K} \right) \sum_{q \leqslant Q} \frac{g(x,q)}{q} \sums_{a \text{ mod } q}  \mathcal{M}(a,q,x,t + v) \mathcal{N}(a,q,x,v)  \d x \d v .
\end{equation} \smallskip

%I think I can put the t as a subscript $\mathcal{M}_t$, I don't know if I should.
Here $\mathcal{M} = \mathcal{M}(a,q,x,t+v)$ is defined as
\begin{equation}
\label{eq:M-definition}
\mathcal{M} =  \sum_{m=1}^\infty  \chi(m) m^{-i(t + v) } e\left(-\frac{am}{q} - \frac{mx}{qQ}\right) U\left(\frac{m}{N}\right)
\end{equation}\smallskip

and $\mathcal{N} = \mathcal{N}(a,q,x,v)$ is 
\begin{equation}
\label{eq:N-definition}
	\mathcal{N} =  \sum_{n =1}^\infty \lambda(1,n) n^{iv} e\left(\frac{an}{q}+ \frac{nx}{qQ}\right)  U\left(\frac{n}{N}\right).
\end{equation}\smallskip

The right side of \eqref{eq:delta} is trivially bounded by $O((pt)^\varepsilon)$. So it may look like the loss is not great, but note that now the $m,n$ variables have been separated and so we have an extra sum of length $N$. Thus we need to save $N$ plus a little extra.

\section{Voronoï formulas}

We now apply Poisson and Vorono\"{i} formulas the $\mathcal{M}$ and $\mathcal{N}$ sums.

\subsection{The $\GL_1$ Poisson}
\label{sec:GL1-poisson}

To take advantage of both the modulus $p$ of $\chi$ and the modulus $q$ of $e(\tfrac{a\cdot}{q})$, apply Poisson summation formula after splitting into classes modulo $pq$.

\begin{lemma}
\label{lem:M0bound}
Let
\begin{equation}
\label{I-integral}
 I(m,x)  :=   \int_{\R} \xi^{-i(t + v)} e\left(-\frac{N \xi  m}{pq} - \frac{N \xi x}{qQ}\right) U(\xi) \d \xi.
\end{equation}
We have that
\begin{equation}
\label{eq:GL1-poisson}
\mathcal{M}= N^{1 - i(t +v)}\frac{\tau(\chi)}{p} \sum_{\substack{ m \ll M_0 \\ m \equiv ap \mod q}} \psi(m,q,a) I(m, x) + O_A((pt)^{-A}),
\end{equation}
where $\tau(\chi)$ is the Gauss sum of $\chi$ and where we used the notation
\begin{equation}
	\psi(m,q,a):= \begin{cases} \chi(q) \overline{\chi(m)} & \text{ if } (p,q) = 1,\\[0.3em]  \chi\big(\frac{q}{p^\ell}\big)  \overline{\chi\big(\frac{(m - ap)}{p^\ell}\big)}  &\text{ if } p^\ell \| q, \text{ with } \ell \geq 1. \end{cases}
\end{equation}

For the behaviour of $I(m,x)$ there are two regimes depending on the size of~$q$. Put
\begin{equation}
	M_0 = M_0(q) := \begin{cases} \displaystyle \frac{pqt}{N} & \text{ if } q\gg t^\delta \sqrt{NK}/t,\\[.7em] \displaystyle \frac{p\sqrt{K}}{\sqrt{N}} & \text{ otherwise.} 
	\end{cases}
\end{equation}
If $q \gg t^\delta \sqrt{NK}/t$ (for some fixed $\delta>0$) then $I(m,x) \ll_{A} (pt)^{-A}$ for any $A>0$ and uniformly in $x\in \R$, except if $|m| \asymp_\varepsilon M_0.$  If $q \ll t^\delta \sqrt{NK}/t$ then $m$ is restricted to $|m| \ll M_0$. 

In either case $I(m,x) \ll t^{-1/2}$.
\end{lemma}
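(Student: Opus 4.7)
The plan is to produce the identity \eqref{eq:GL1-poisson} via Poisson summation and character-sum evaluation, and then to handle $I(m,x)$ by stationary phase.

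Since $\chi$ has period $p$ and $e(-a\cdot/q)$ has period $q$, I would split $m$ into residue classes modulo $pq$:
\begin{equation*}
\mathcal{M} = \sum_{b \mod pq} \chi(b)\, e\!\left(-\frac{ab}{q}\right) \sum_{\ell \in \Z} F(b + pq\ell), \qquad F(y) := y^{-i(t+v)} e\!\left(-\frac{yx}{qQ}\right) U\!\left(\frac{y}{N}\right),
\end{equation*}
and apply Poisson to the inner sum, producing $(pq)^{-1}\sum_m \hat F(m/(pq))\,e(bm/(pq))$. Combining the two additive characters gives $e(b(m-ap)/(pq))$, reducing matters to the character sum $\mathfrak{C}(m;q,a) = \sum_{b \mod pq} \chi(b)\, e(b(m-ap)/(pq))$. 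The rescaling $y = N\xi$ identifies $\hat F(m/(pq))$ with $N^{1-i(t+v)} I(m,x)$, which will give the announced prefactor $N^{1-i(t+v)}\tau(\chi)/p$.

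Next, I would evaluate $\mathfrak{C}$ by the Chinese Remainder Theorem according to whether $p \mid q$. When $(p,q)=1$, writing $b$ via CRT as $b \equiv \beta \mod p$ and $b \equiv \gamma \mod q$ decouples the character (a function of $\beta$) from the additive phase modulo $q$ (a function of $\gamma$): the $\gamma$-sum vanishes unless $q \mid m-ap$, and the $\beta$-sum is a twisted Gauss sum evaluating to $\chi(q)\overline{\chi(m-ap)}\,\tau(\chi) = \chi(q)\overline{\chi(m)}\,\tau(\chi)$. When $p^\ell \| q$ with $\ell \geq 1$, writing $q=p^\ell q'$ and $pq = p^{\ell+1}q'$, an analogous CRT decomposition produces an extra inner sum in the $p$-part of $b$ which vanishes unless $p^\ell \mid m-ap$, and the remaining Gauss sum modulo $p$ gives $\chi(q/p^\ell)\overline{\chi((m-ap)/p^\ell)}\,\tau(\chi)$. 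Both cases match $\psi(m,q,a)$ and produce the congruence $m \equiv ap \mod q$. The tail $|m| \gg M_0$ is discarded by the rapid decay of $\hat F$ established below, absorbed into $O_A((pt)^{-A})$.

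For $I(m,x)$, I would apply stationary phase to the phase
\begin{equation*}
\phi(\xi) = -\frac{t+v}{2\pi}\log\xi - \frac{N\xi}{q}\left(\frac{m}{p} + \frac{x}{Q}\right),
\end{equation*}
whose derivative $\phi'(\xi) = -\frac{t+v}{2\pi\xi} - \frac{N}{q}(\frac{m}{p} + \frac{x}{Q})$ vanishes at $\xi_0 = -(t+v)/[2\pi (N/q)(m/p+x/Q)]$. The support $\xi_0 \in [1,2]$ forces $|m/p + x/Q| \asymp qt/N$. In the regime $q \gg t^\delta \sqrt{NK}/t$, the contribution $|x/Q| \leq 1/Q$ is dominated by $qt/N$, so localization yields $|m| \asymp pqt/N = M_0$; in the complementary regime the $x/Q$ term can dominate, forcing only $|m| \ll p/Q = M_0$. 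Outside these ranges $|\phi'|$ stays above $(pt)^\varepsilon$, so repeated integration by parts gives arbitrary power saving. At the stationary point $\phi''(\xi_0) \asymp t$, and the classical stationary phase estimate delivers $I(m,x) \ll t^{-1/2}$.

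The main obstacle, in my view, is the case analysis for $I(m,x)$: keeping track of the relative sizes of the three contributions $t/\xi$, $Nm/(pq)$ and $Nx/(qQ)$ to $\phi'$, and verifying that the non-stationary integration by parts gains $(pt)^\varepsilon$ uniformly in $x \ll_\varepsilon 1$. The conductor-lowering constraint $K \ll (pt)^{-\varepsilon}\min(\sqrt{pt},t)$ is exactly what makes this case split clean.
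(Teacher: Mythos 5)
Your proposal is correct and follows essentially the same route as the paper: Poisson summation modulo $pq$, a CRT/Gauss-sum evaluation of the complete character sum producing $\psi(m,q,a)$ and the congruence $m \equiv ap \pmod{q}$, and a stationary phase analysis of $I(m,x)$ with the same two regimes in $q$ yielding the localization of $m$ and the second-derivative bound $t^{-1/2}$. The only step you pass over is the zero frequency $m=0$, which the paper checks separately: it survives with $\psi\neq 0$ only when $p \,\|\, q$, and there the estimate $Nx/(qQ) \ll \sqrt{NK}/p \ll t^{1-\varepsilon}$ (using the standing assumption $K \ll (pt)^{-\varepsilon}\sqrt{pt}$) keeps the stationary point outside the support of $U$ — a remark worth including since the later argument uses the sum with $m \geq 1$.
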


\begin{proof}
Applying Poisson summation modulo $pq$ we get,
\begin{equation}
	\mathcal{M} = \frac{N^{1-i(t + v)}}{pq} \sum_{m \in\Z} \sum_{u \pmod{pq}} \chi(u) e\left(-\frac{au}{q} + \frac{mu}{pq} \right) I(m,x).
\end{equation}

If $(p,q) = 1$ we can apply Chinese Remainder Theorem and factor the arithmetic sum modulo~$pq$, so that we get
\begin{align}
	 \sum_{u \pmod{pq}} \chi(u) e\left(-\frac{au}{q} + \frac{mu}{pq} \right)  &=  \sum_{u_1 \pmod{p}} \chi(u_1) e\left(\frac{m\overline{q}u_1}{p} \right) \sum_{u_2 \pmod{q}} e\left(\frac{m\overline{p} - a}{q}u\right)\\
	 & = \overline{\chi(m\overline{q}) }\tau(\chi) q \delta_{m \equiv pa \pmod{q}}.
\end{align}

If $q = p^\ell q'$ with $(q', p) = 1$ and $\ell\geq 1$, we factorize the sum into prime powers. For primes different than $q$, we only get the condition $m \equiv ap \pmod{q'}$. For the $p$-factor, write $u = u_1 + pu_2$ where $u_1$ runs modulo $p$ and $u_2$ runs modulo $p^\ell$. The sum becomes,
\begin{align}
	 \sum_{u \pmod{pq}} \chi(u) e\left(-\frac{au}{q} + \frac{mu}{pq} \right) &= q' \delta_{m \equiv ap \pmod{q'} } \sum_{u \pmod{p^{\ell+1}}} \chi(u) e\left(\frac{-au\overline{q'}}{p^\ell} + \frac{m u\overline{q'}}{p^{\ell+1}}\right)   \\ \notag
	 &= q' \chi(q') \sum_{u_1 \pmod{p}} \chi(u_1) e\left(\frac{(m-ap)/p^\ell}{p}u_1 \right)  \sum_{u_2 \pmod{p}} e\left(\frac{m- ap}{p^\ell}u_2\right).
\end{align}
The second sum is $p^\ell \delta_{m \equiv ap \pmod{p^\ell}}$, thus in the first sum $(m-ap)/p^\ell$ is an integer, and using properties of Gauss sums we obtain $\psi(m,q,a)$.

For the $I(m, x)$ integral, we apply the stationary phase argument as in \cite{kiral2019oscillatory}. Introduce the phase~$\phi(\xi) = -\frac{1}{2\pi}(t+v) \log(\xi) - \frac{Nm \xi}{pq} - \frac{Nx\xi}{qQ}$ so that we have $I(m,x) = \int_{\R} U(\xi) e(\phi(\xi))\d \xi.$ Recall that $U$ is supported in $[1,2]$. We have 
\begin{equation}
	\phi'(\xi) = -\frac{t + v}{2\pi \xi } - \frac{Nm}{pq} - \frac{Nx}{qQ}.
\end{equation}

Since $K < t^{1-\varepsilon}$ we have  $t + v \asymp t$. In the large $q$ regime, $Nx/qQ \ll t^{1-\delta}$, therefore the stationary point would fall inside the support of $U$ only if $m \asymp_\varepsilon M_0$.  If $q \asymp \sqrt{NK}/t$, then $Nm/pq$ can go up to size $t$ without harming the stationary point. That is $|m| \ll pqt/N \asymp M_0$. If $q \ll t^\delta \sqrt{NK}/t$, then for a stationary point to occur the $Nm/pq$ term and the $Nx/qQ$ term should be of the same size, i.e. $|m| \asymp M_0$. In these cases where the stationary point is inside the support of $U$, the second derivative bound gives $I(m,x)\ll  t^{-1/2}$.

Now for the constant term, if $(p,q) = 1$ then $\chi(m)$ gets rid of the $m=0$ term. Also, if $p^2 | q$, then $ap$ is never congruent to zero modulo $q$. So we only have the case $p\|q$.  But also then $\phi'(\xi) = -\frac{t + v}{2\pi \xi} - \frac{Nx}{qQ}$. Thus since
\begin{equation}
	\frac{Nx}{qQ} \ll \frac{N}{pQ} \ll \frac{\sqrt{NK}}{p} \ll t^{1-\varepsilon},
\end{equation}
we know that the stationary point is outside of the support of $U$.
\end{proof}

We can now bound roughly
\begin{equation}\label{eq:Mroughbound}
\sums_{a \mod{q}} \mathcal{M}(a,q,x, t + v) \ll \frac{NM_0}{\sqrt{pt}} \ll_\varepsilon q\sqrt{pt}.
\end{equation}

At this point, just for tracking our progress, if we apply a trivial bound on \eqref{eq:summary1}, we get 
\begin{align}
	S(N) & \ll \frac{1}{KQ} \int_\R \int_\R V\left(\frac{v}{K}\right) \sum_{q \leq Q} \frac{g(x,q)}{q} \left(\ \sums_{a \pmod{q}} \mathcal{M}(a,q)\right) \max_{a\pmod{q}^*} \mathcal{N}(a,q)\d x \d v \\
	& \ll_\varepsilon\frac{1}{Q} \sum_{q \leq Q} \int_{\R} g(x,q) \d x \sqrt{pt} N \ll_\varepsilon \sqrt{pt}N \ll_\varepsilon \sqrt{N} (pt)^{3/4 + 1/2},
\end{align}
where we used the Ramanujan-on-average bound \cite[(2.6)]{li_bounds_2011} on the $\GL_3$ Fourier coefficients after Cauchy-Schwarz for the $\mathcal{N} \ll_\varepsilon N$ bound. Notice that by this application of Poisson, we gained our foothold back from a lost $N= (pt)^{3/2}$ position to a lost $(pt)^{1/2}$ from convexity.

\subsection{The $\GL_3$ Vorono\"{i}}
\label{sec:GL3-poisson}

The $\GL_3$ Vorono\"{i} summation  \cite{miller2006automorphic}, see also \cite[Lemma 3]{blomer_subconvexity_2012}, reads
\begin{equation}
\label{eq:voronoi-GL3}
	\sum_{n = 1}^\infty \lambda(1,n) e\left(\frac{na}{q}\right) g(n) = q \sum_{\pm} \sum_{n_1 | q} \sum_{n_2 = 1}^\infty \frac{\lambda(n_2,n_1)}{n_1n_2} S\left({\overline{a}}, \pm n_2, \tfrac{q}{n_1}\right)   G_\pm \left(\frac{n_1^2 n_2}{q^3}\right).
	\end{equation}
Here we introduce the Mellin-Barnes integral, for $\ell \in \{0, 1\}$, 
\begin{equation} \label{eq:Gdefn}
	G_{\ell}(z) = {\frac{\pi^{3/2}}{2}}\frac{1}{2\pi i } \int_{(\sigma) } (\pi^3 z)^{-s} \frac{\Gamma(\frac{1+s + \alpha_1 + \ell}{2})\Gamma(\frac{1+s + \alpha_2 + \ell}{2})\Gamma(\frac{1+s + \alpha_3 + \ell}{2})}{\Gamma(\frac{-s - \alpha_1 + \ell}{2}) \Gamma(\frac{-s - \alpha_2 + \ell}{2}) \Gamma(\frac{-s - \alpha_3 + \ell}{2})} \widetilde{g}(-s) \d s, 
\end{equation}
\noindent and define $G_{\pm} = G_0 \pm i G_1$. In the above, $(\alpha_1, \alpha_2, \alpha_3)$ are the Langlands parameters of $\pi$ and the function $\widetilde{g}$ is the Mellin transform of $g$ defined by
\begin{equation}
\widetilde{g}(s) = \int_0^\infty g(y) y^{s-1} \d y.
\end{equation}

We work with $g(y) = U(\tfrac{y}{N}) y^{iv}  e(\tfrac{xy}{qQ})$ from the $\mathcal{N}$ sum \eqref{eq:N-definition}. Also from now on we will focus on $G = G_+$ as the \emph{minus case} is treated \emph{mutatis mutandis}.

The function $g$ is supported in $[N, 2N]$, then in the range $yN \gg N^\varepsilon$ we can extract the modulus and phase of $G(y)$ explicitly \cite[Lemma 2.1]{li_bounds_2011}. This motivates to separate the treatment of \eqref{eq:voronoi-GL3} into two cases: the complementary range $\tfrac{n_1^2n_2}{q^3} \ll \tfrac{N^\varepsilon}{N}$ and the main range $\tfrac{n_1^2n_2}{q^3} \gg  \tfrac{N^\varepsilon}{N}$. Let us call $\Nmain$ the contribution from $n_1^2n_2 \geq \tfrac{N^\varepsilon q^3}{N}$ terms in \eqref{eq:voronoi-GL3}, and $\Ncomp$ the remaining sum. The decomposition $\mathcal{N} = \Nmain + \Ncomp$ also gives us a decomposition $S(N) = S_{\text{main}}(N) + S_{\text{comp}}(N)$ via \eqref{eq:summary1}.
%\begin{equation}
%	\Nmain = q \sum_{n_1 | q} \sum_{\substack{n_2 = 1\\ n_1^2 n_2 \geq q^3/N}}^\infty \frac{\lambda(n_2,n_1)}{n_1 n_2} S(m\overline{p}, n_2, \tfrac{q}{n_1}) G\left(\frac{n_1^2 n_2}{q^3}\right)
%\end{equation}

\subsubsection{The complementary range}

In the complementary range, we follow \cite[Section 3.3]{munshi_subconvexity_2018}.

\begin{lemma}
In the range $z \ll_\varepsilon  \tfrac{1}{N}$ and $G$ as in \eqref{eq:Gdefn} we have
\begin{equation}
\label{eq:G-bound}
G\left( z \right) \ll_\varepsilon  K^{-1/2} .
\end{equation}
\end{lemma}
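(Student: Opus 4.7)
The strategy is to shift the line of integration in the Mellin--Barnes representation \eqref{eq:Gdefn} of $G_\ell$ (for $\ell \in \{0, 1\}$, whence $G = G_+ = G_0 + iG_1$) far to the left and collect the residues. Starting from any large positive $\Re(s) = \sigma_0$ where the integral converges, shift to the line $\Re(s) = -1 - \ell - \eta$ with $\eta > 0$ small. Under the Kim--Sarnak bound on the Langlands parameters $\alpha_j$, the only poles crossed are the three simple poles of the numerator factors $\Gamma\!\left((1+s+\alpha_j+\ell)/2\right)$, located at
\begin{equation}
s_j = -1 - \alpha_j - \ell, \qquad j = 1, 2, 3.
\end{equation}

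The residue of the integrand at each $s_j$ equals $C_j\, (\pi^3 z)^{1+\alpha_j+\ell}\, \widetilde{g}(1 + \alpha_j + \ell)$, where $C_j$ is an absolute constant built from $\Gamma$-values at fixed differences of the $\alpha_i$. Substituting $y = Nu$ in the Mellin transform yields
\begin{equation}
\widetilde{g}(1 + \alpha_j + \ell) = N^{1 + \alpha_j + \ell + iv} \int_1^2 U(u)\, u^{\alpha_j + \ell}\, e^{i v \log u}\, e\!\left(\tfrac{xNu}{qQ}\right)\mathrm{d}u.
\end{equation}
The crucial observation is that the phase $\phi(u) = v \log u + 2\pi x N u/(qQ)$ satisfies $|\phi''(u)| = v/u^2 \asymp K$ uniformly on $[1,2]$, since $v \in [K, 2K]$. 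A standard stationary-phase/integration-by-parts dichotomy (stationary phase when $\phi'$ vanishes inside $[1,2]$, repeated integration by parts otherwise) therefore gives $\int \ll K^{-1/2}$, so that $\widetilde{g}(1 + \alpha_j + \ell) \ll N^{1 + \Re(\alpha_j) + \ell}\, K^{-1/2}$. Combined with the hypothesis $zN \ll_\varepsilon 1$, each of the three residues contributes at most $\ll_\varepsilon K^{-1/2}$.

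The contribution from the shifted line $\Re(s) = -1 - \ell - \eta$ is then controlled as follows. Stirling's formula gives the Gamma-ratio bound $(1+|\tau|)^{3\sigma + 3/2}$, which is integrable against $\mathrm{d}\tau$ at this value of $\sigma$. The Mellin transform satisfies $\widetilde{g}(-\sigma - i\tau) \ll_A N^{-\sigma}(1 + |\tau - v|)^{-A}$ via integration by parts against the smooth amplitude $U$, confining effective support to $|\tau - v| \ll_\varepsilon \max(1,\,|x|N/(qQ))$; inside that range the same stationary-phase analysis as above provides an additional $K^{-1/2}$. The outer factor $(\pi^3 z)^{-\sigma}$, combined with $zN \ll_\varepsilon 1$ and $\sigma < -1$, then produces a shifted-line contribution $\ll K^{-1/2}$ with room to spare.

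The main obstacle is the uniformity of the stationary-phase treatment of the $u$-integral in $\widetilde{g}(1 + \alpha_j + \ell)$. One must verify the $K^{-1/2}$ bound across the full range of parameters $v \in [K, 2K]$, $|x| \ll_\varepsilon 1$ and $q \leq Q$, particularly when the terms $v/u$ and $2\pi xN/(qQ)$ in $\phi'(u)$ can nearly cancel on a subinterval, making the first-derivative estimate weak. Fortunately the unconditional lower bound $|\phi''(u)| \asymp K$ comes entirely from the $v \log u$ piece and is unaffected by that cancellation, so a single application of the stationary-phase lemma handles all parameter regimes uniformly.
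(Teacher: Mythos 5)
Your overall strategy is the same as the paper's: shift the Mellin--Barnes contour left, bound the crossed residues via the $u$-integral $\widetilde{g}(1+\alpha_j+\ell)=N^{1+\alpha_j+\ell+iv}\int U(u)u^{\alpha_j+\ell}e^{iv\log u}e(xNu/qQ)\,\dd u$, whose phase has second derivative $\asymp v\asymp K$, giving the $K^{-1/2}$ saving, and then control the shifted line by Stirling plus decay of $\widetilde g$. The residue treatment, which is where the lemma's bound actually comes from, is correct and matches the paper (the paper phrases the same saving as $|\mathrm{Im}(\alpha_i)-v|^{-1/2}\ll K^{-1/2}$).

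There is, however, a genuine flaw in your treatment of the shifted-line integral. On the line $s=\sigma+i\tau$ the $u$-phase is $(v-\tau)\log u+2\pi xNu/qQ$, so its second derivative is $\asymp|v-\tau|$, \emph{not} $\asymp K$; hence the claim that ``the same stationary-phase analysis provides an additional $K^{-1/2}$'' fails whenever $\tau$ lies within $o(K)$ of $v$. This region is not excluded by your support analysis: for instance when $|x|N/(qQ)\ll 1$ the entire effective support is $|\tau-v|\ll_\varepsilon 1$, where the $u$-integral yields no saving at all, and then your stated ingredients (integrable Gamma ratio, $(zN)^{-\sigma}\ll_\varepsilon 1$, trivial $u$-integral) only give $O(1)$, not $K^{-1/2}$. (Relatedly, the displayed bound $\widetilde g(-\sigma-i\tau)\ll_A N^{-\sigma}(1+|\tau-v|)^{-A}$ is false when $|x|N/(qQ)\gg1$; the rapid decay holds only away from $\tau-v\asymp xN/(qQ)$, as your subsequent support statement implicitly acknowledges.) The repair is exactly the mechanism the paper uses: when $\tau$ is close to $v$ one has $|\tau|\asymp K$, so the Stirling factor $(1+|\tau|)^{3\sigma+3/2}$ itself contributes $\ll K^{3\sigma+3/2}\leq K^{-3/2}$ at $\sigma\leq-1-\eta$, which more than compensates for the lost stationary-phase saving; splitting the $\tau$-range into $|\tau-v|\gg K$ (use $|\tau-v|^{-1/2}\ll K^{-1/2}$) and $|\tau-v|\ll K$ (use the Gamma decay at $|\tau|\asymp K$) gives the claimed $\ll_\varepsilon K^{-1/2}$ for the line contribution. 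A minor further imprecision: at $\Re(s)=-1-\ell-\eta$ not all of the poles $s_j=-1-\alpha_j-\ell$ are necessarily crossed (this depends on the signs of $\Re(\alpha_j)$), but this is harmless since uncrossed poles simply do not contribute and crossed ones are bounded as you do.
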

\begin{proof}
Firstly note that 
\begin{equation}
\widetilde{g}(-s) = \int_{0}^\infty g(y) y^{-s} \frac{\d y}{y} = N^{-s+iv} \int_0^\infty U(y) \eta^{-y} e^{i \left(2\pi \frac{xN}{qQ}y + (v - \tau)\log y\right)} \frac{\d y }{y},
\end{equation}
with $s = \sigma + i\tau$. Secondly note that from stationary phase methods the integral is negligible unless 
\begin{equation}
\left|2\pi \frac{xN}{qQ} + \frac{v - \tau}{y} \right| \ll_\varepsilon 1
\end{equation}
which translates to $|v - \tau| \asymp \frac{xN}{qQ}$. Then the second derivative yields $\widetilde{g}(-s) \ll N^{- \sigma } |\tau  - v|^{-1/2}.$

Now in \eqref{eq:Gdefn} the ratio of Gamma functions are approximated as $(1 + |\tau|)^{\frac{3}{2} + 3\sigma}$ using the fact that $\alpha_1 + \alpha_2 + \alpha_3 = 0$. 
Move the line of integration to $\sigma = -3/2$. For $\ell = 1$ the integrand is analytic on the region in between since $|\Re(\alpha_i) | \leq 1/2$. For $\ell =0$ we pass three poles at $s =-1 -\alpha_i$, $i \in \{ 1,2,3\}$. At that vertical line of integration, we have that the integral is supported essentially for $|v - \tau| \ll_\varepsilon xN/qQ.$ We now bound
\begin{equation}
	G_\ell (z) \ll \int_{(-3/2)} z^{-\sigma} (1 + |\tau|)^{-3} |\widetilde{g}(-s)| \d s  +\delta_{\ell = 0} \sum_{i =1}^3 (\pi^3 z)^{1+\alpha_i }\frac{\prod_{j \neq i} \Gamma(\frac{\alpha_j - \alpha_i}{2})}{\prod_{j = 1}^3 \Gamma( \frac{1 + \alpha_i - \alpha_j}{2})} \widetilde{g}(1 + \alpha_i).
\end{equation}
The contribution of the residues is bounded by $z^{1+\alpha_i} |\widetilde{g}(1 + \alpha_i)| \ll_\varepsilon |\mathrm{Im}(\alpha_i) - v|^{-1/2} \ll_\varepsilon\tfrac{1}{K^{1/2}}$ in the complementary range.

For the integral we make a change of variables $\tau \mapsto \tau + v$ and separate it into the regions $|\tau| < 1$ and $1\leq |\tau| \ll xN/qQ$. So the integral we have is bounded by
\begin{equation}
	\int_{|\tau|\leq 1} |\tau + v|^{-3} |\tau|^{-1/2} \d \tau + \int_{1}^{\frac{xN}{qQ}} (1 + |\tau + v|)^{-3} \d \tau + \int_1^{\frac{xN}{qQ}} (1 + |v- \tau |)^{-3} |\tau|^{-1/2} \d \tau.
\end{equation}
In the first integral $|\tau|^{-1/2}$ is integrable so its contribution is $K^{-3}$. For the second integral we have decided to ignore $|\tau|^{-1/2} \leq 1$ term and bound it simply by $K^{-2}$. For the third integral, splitting the integral dyadically $\frac{K} {2^{i + 1}}  \leq  |v - \tau| \leq \frac{K}{2^{i}}$ we obtain the result.
\end{proof}

Using the bound \eqref{eq:G-bound}  and the Weyl bound for Kloosterman sums in \eqref{eq:voronoi-GL3}, we get a bound for the $\GL_3$-sum in the complementary range given by
\begin{equation}
\Ncomp \ll_\varepsilon \frac{q^{3/2}}{K^{1/2}} \sum_{\substack{n_1 \mid q, n_2\\ n_1^2 n_2 \ll  q^3/N }} \frac{|\lambda(n_2, n_1)|}{n_1^{3/2}n_2}.
\end{equation}

We apply Cauchy inequality and get
\begin{equation}
\Ncomp \ll_\varepsilon \frac{q^{3/2}}{K^{1/2}} \left( \sum_{n_1^2n_2 \ll q^3  / N} \frac{1}{n_1n_2} \right)^{1/2} \left( \sum_{n_1^2n_2 \ll  q^3  / N} \frac{|\lambda(n_2, n_1)|^2}{n_1^2n_2} \right)^{1/2}.
\end{equation}
The first term is essentially bounded. We appeal to the Ramanujan  bound on average \cite[(2.6)]{li_bounds_2011}, so that the second term is also essentially bounded. We therefore get 
\begin{equation} \label{eq:Ncompbound}
\Ncomp \ll_\varepsilon \frac{q^{3/2}}{K^{1/2}} \ll_\varepsilon \frac{N^{3/4}}{K^{5/4}}.
\end{equation}

Putting together \eqref{eq:Mroughbound} and \eqref{eq:Ncompbound} inside \eqref{eq:summary1}, we deduce
\begin{align}
\label{a}
\Scomp \ll_\varepsilon \frac{\sqrt{pt} N^{3/4}}{K^{5/4}} =  \frac{N^{3/4}\sqrt{pt}}{K^{5/4}} { = N^{1/2}(pt)^{3/4} \cdot \frac{ N^{1/4}}{(pt)^{1/4}K^{5/4}}}.
\end{align}
\noindent This is a subconvex bound for any value of $K > (pt)^{1/10}$.

\subsubsection{The main range}

In this range the weight function $G$ behaves as follows. Since $g$ is supported in $[N, 2N]$, then in the range $zN \gg N^\varepsilon$ we can extract the modulus and phase of $G_{\pm}(z)$ \cite[Lemma 2.1]{li_bounds_2011} and write for any $A>0$,
\begin{equation}
G_{\pm}(z) =  {\alpha} z^{2/3} \int_\R  g(y) y^{-1/3} e\left(
\pm z^{1/3}y^{1/3}\right)d y + O_A((zN)^{-A}).
\end{equation} 
{for a certain constant $\alpha$, depending only on $\pi$.}
Substituting $z = n_1^2n_2/q^3$, $g(y) = y^{iv}e\big(\frac{xy}{qQ}\big)U\left(\frac{y}{N}\right)$ and the asymptotic expansion above into \eqref{eq:voronoi-GL3} and changing the variable $y \mapsto Ny$, we obtain
\begin{align}
\label{GL3-voronoi}
 \Nmain =&  \alpha \frac{N^{2/3+iv}}{q} \sum_{\substack{n_1\mid q, n_2\\ n_1^2 n_2 \gg q^3/N}} n_1^{1/3}n_2^{-1/3} \lambda(n_2, n_1) S\left({\overline{a}}, n_2, \frac{q}{n_1}\right)  \\
&  \quad \times \int_\R y^{iv-1/3} U(y) e\left( \frac{Nyx}{qQ} + \frac{(Nn_1^2n_2y)^{1/3}}{q} \right)  \dd y.\notag
\end{align}

Integrating by parts repeatedly the integral appearing above, we get a term majorized by
\begin{equation}
\left( \frac{q}{(Nn_1^2n_2)^{1/3}} \right)^k \left[ v^k + \left( \frac{Nx}{qQ} \right)^k \right]
\end{equation}

\noindent for all $k \geqslant 0$, so that the integral is vanishingly small except when
\begin{equation}
n_1^2n_2 \ll_\varepsilon \frac{(qK)^3}{N} + K^{3/2}N^{1/2} \ll_\varepsilon K^{3/2}N^{1/2} =: N_0.
\end{equation}

Let us summarize: $\Smain$ is equal to
\begin{align}
\notag  &  \alpha \frac{N^{5/3{-it}}{\tau(\chi)}}{{p} KQ} \sum_{q \leqslant Q} \frac{{1}}{q^2} {{\sums_{a \mod q}}}  \sum_{\substack{ m \ll M_0 \\ {m \equiv ap \mod q}}}  {\psi(m, q, a)} \sum_{\substack{n_1 \mid q, n_2\\ q^3/N \ll n_1^2 n_2 \ll N_0}} n_1^{1/3}n_2^{-1/3} \lambda(n_2,n_1) S\left({\overline{a}}, n_2, \frac{q}{n_1}\right)\\
&\quad \times \int_\R \int_\R V\left( \frac{v}{K} \right)  g(x,q) \int_\R U(\xi) \xi^{-i(t+v)} e\left(-\frac{Nm\xi}{pq	} - \frac{N\xi x }{qQ}\right) \d\xi  \\
&\quad \times \int_{\R} y^{iv - 1/3} U(y) e\left( \frac{Nyx}{qQ} + \frac{(Nn_1^2n_2 y)^{1/3}}{q} \right)\d y  \d x \d v + O_A(N^{-A}),\notag
\end{align}

\noindent where we recall the bounds 
\begin{align}
1 & \ll  m \ll_\varepsilon M_0  \label{eq:M0defn} \\
q^3/N & \ll n = n_1^2n_2 \ll_\varepsilon N_0 = N^{1/2}K^{3/2} \label{eq:N0defn} \\
N/pt & \ll q  \ll N^{1/2}/K^{1/2}. \label{eq:CBound}
\end{align}

\subsection{Simplifying the $x$-integral}
\label{subsec:x-integral}

We now concentrate on the $x$ and the $v$-integrals,
\begin{equation}
\mathcal{W} := \frac{1}{K} \int_{\R} V\left(\frac{v}{K} \right) \int_\R g(x,q) e\left( \frac{Nx}{qQ}(y-\xi) \right)  \mathrm{d} x  \left(\frac{y}{\xi}\right)^{iv}\d v.
\end{equation}
Using the same arguments as in \cite[\S 4.1]{munshi_subconvexity_2018}, the integral is vanishingly small unless
\begin{equation} \label{eq:usize}
|y-\xi| \ll_\varepsilon \frac{qQ}{N}.
\end{equation}
This motivates the change of variable $y = \xi + u$ with $u \ll_\varepsilon qQ/N$ (note that this is small since $qQ/N \ll 1/K$).  We get
\begin{align}
\notag\Smain & = \alpha \frac{N^{5/3{-it}} {\tau(\chi)}}{{ p} Q} \sum_{q \leqslant Q} \frac{{1}}{q^2} {{\sums_{a \mod q}}} \sum_{\substack{ m \ll M_0\\ {m\equiv ap\pmod{q}}}}  {\psi(m, q,a)} \\
& \qquad \times  \sum_{\substack{n_1 \mid q \\ q^3/N \ll n_1^2n_2\ll N_0}} n_1^{1/3}n_2^{-1/3} \lambda(n_2,n_1) S\left({\overline{a}}, n_2, \frac{q}{n_1}\right) \\
& \notag   \qquad \times  \int_{|u|\ll \frac{qQ}{N}}  I(m,n_1^2 n_2,q,p)\mathcal{W} \d u + O_A(N^{-A}).
\end{align}
Here we introduced
\begin{equation} \label{eq:Idefinition}
I(m, n_1^2n_2, q, p) = \int_0^\infty U_1(\xi) \xi^{-it} e\left( -\frac{Nm\xi}{pq}+ \frac{(Nn_1^2n_2 (\xi + u))^{1/3}}{q} \right) \d \xi.
\end{equation}
where $U_1(\xi) = U(\xi) U(\xi + u) (\xi + u)^{-1/3}$.
It is a nonoscillating bump function, and we may as well drop the subscript from now on.

The $u$ integral is on a small interval, of size $\ll_\varepsilon 1/K$. We bound it trivially by the supremum of its value times the length of the integral.  Furthermore, we cut the $q$ sum dyadically into pieces $q \sim C$. Therefore we have
\begin{equation} \label{SmainBound}
 \Smain \ll_\varepsilon  \sup_{u \ll qQ/N} \sum_{\substack{C\ll Q,\\ C \text{ dyadic}}}\frac{N^{2/3}}{\sqrt{p}} |\Smainprime(N,C)|.
\end{equation}
Here the $u$ and the $v$ dependence is inside $\mathcal{W}$ and $I$, and $\Smainprime(N,C)$ is defined as
\begin{equation} \label{eq:Smainprime}
\sum_{q \sim C} \frac{1}{q}  {{\sums_{a \mod q}}} \sum_{\substack{m \ll M_0\\ {m \equiv ap \mod{q}}}} \!\!\!\!\! {\psi(m, q,a)} \hspace{-0.3cm} \sum_{\substack{n_1 \mid q \\ q^3/N \ll  n_1^2n_2\ll N_0}} \hspace{-0.5cm} n_1^{1/3} \frac{\lambda(n_2,n_1)}{n_2^{1/3}} S\left({\overline{a}}, n_2, \frac{q}{n_1}\right) \mathcal{W} I(m, n_1^2n_2, q, p) .
\end{equation}

\section{Poisson Summation Formula}
\label{sec:cauchy}

We bring the $m$ and $q$ sums inside and take absolute values, thus giving up on obtaining sign cancellation from $\lambda(n_2, n_1)$. We now have, 
\begin{align}
\Smainprime(N,C) \ll \sum_{n_1 \ll C} n_1^{1/3}  \sum_{n_2 \ll N_0/n_1^2} \frac{|\lambda(n_2, n_1)|}{n_2^{1/3}} \Bigg| \sum_{\substack{q \sim C\\ n_1 | q }} {{\sums_{a \mod q}}} \sum_{\substack{m \ll M_0 \\ m \equiv ap \mod{q}}} \frac{1}{q} { \psi(m, q,a)}\mathcal{C} \mathcal{I} \Bigg|
\end{align}
\noindent where
\begin{align}\label{eq:CandI}
\mathcal{C} & = S({\overline{a}}, n_2, q/n_1), \\
\mathcal{I}  & = \mathcal{W} I(m, n_1^2n_2, q, p).
\end{align}

We can therefore use Cauchy-Schwarz inequality on the $n_2$-sum to write
\begin{equation}
\label{eq:SmainPrimeCauchySchwartz}
\Smainprime \ll  \sum_{{n_1 \ll C}} n_1^{1/3} \Theta^{1/2} \Omega^{1/2}
\end{equation}
\noindent where
\begin{equation}
\Theta = \sum_{n_2 \ll N_0/n_1^2} \frac{|\lambda(n_2, n_1)|^2}{n_2^{2/3}}
 \quad \text{and} \quad
\Omega = \sum_{n_2 \ll N_0/n_1^2} \Bigg| \sum_{\substack{q \sim C \\ n_1 \mid q}} {{\sums_{a \mod q}}} \sum_{\substack{m\ll M_0\\ m \equiv ap \mod{q}}}\frac{1}{q} { \psi(m,q,a)} \mathcal{C}  \mathcal{I} \Bigg|^2.
\end{equation}

Expanding the square in $\Omega$ and explicitly denoting the bound on $n_1^2n_2$ by way of a bump function $\varphi$ with $\operatorname{supp} \varphi \subseteq [1,2]$, we can write
\begin{equation} \label{eq:Omega}
\Omega \ll \sum_{\substack{q, q' \sim C\\ n_1 \mid q, q'}} {{\sums_{\substack{a \mod q\\ a' \mod q'}}}} \sum_{\substack{m, m'\ll M_0\\{m \equiv ap \mod q}\\ {m' \equiv a'p \mod q'}}} \sum_{n_2 \ll N_0/n_1^2} \, \frac{1}{qq'}  {\psi(m, q,a) \overline{\psi}(m', q',a')} \mathcal{C}\overline{\mathcal{C}'} \mathcal{I} \overline{\mathcal{I}'} \varphi\left( \frac{n_1^2n_2}{N_0} \right).
\end{equation}
Here $\mathcal{I}'$ or $\mathcal{C}'$ indicate that in \eqref{eq:CandI}, the variables are taken to be primed.

In the next lemma we apply Poisson summation formula on the $n_2$-summation. To take advantage of both moduli $q/n_1$ and $q'/n_1$ in the Kloosterman sums $\mathcal{C}$ and $\mathcal{C}'$ we consider it modulo $B:=qq'/n_1^2$.

\begin{lemma} \label{lem:OmegaBound}
Let ${\Xi = \Xi(q, q', m, m',a,a') = \psi(m, q,a) \overline{\psi}(m', q',a')/qq'}$. With $\Omega$ and $B$ as above, we have
\begin{equation} \label{eq:OmegaBound}
\Omega  \ll \frac{N_0}{n_1^2} \sum_{n_2 \ll N_0/n_1^2} \sum_{\substack{q, q' \sim C\\ n_1 \mid q, q'}} \frac{1}{B} {{\sums_{\substack{a \mod q\\ a' \mod q'}}}} \sum_{\substack{m, m' \ll M_0\\{m \equiv ap \mod q}\\ {m' \equiv a'p \mod q'}}} \Xi \mathfrak{C} \mathfrak{I}.
\end{equation}
Here
\begin{align}
\label{eq:I-frak-integral}
\mathfrak{I} &:= \int_\R \varphi(w) \mathcal{I}(m, N_0 w, q) \overline{\mathcal{I}}(m', N_0 w, q') e\left( -\frac{N_0n_2w}{Bn_1^2} \right) \d w, \\
\mathfrak{C} &:= \sum_{r \pmod B} S({\overline{a}}, r, \tfrac{q}{n_1}) \overline{S({\overline{a'}}, r, \tfrac{q'}{n_1})} e\left( \frac{n_2r}{B} \right).
\end{align}
\end{lemma}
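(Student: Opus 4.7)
The plan is to apply Poisson summation on the $n_2$-variable, taking advantage of the fact that the arithmetic part of the summand is periodic modulo $B = qq'/n_1^2$ while the analytic part varies smoothly in $n_2$.

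First, I would isolate the $n_2$-dependent factors in \eqref{eq:Omega}: the Kloosterman sums $\mathcal{C}\overline{\mathcal{C}'}$, the integrals $\mathcal{I}\overline{\mathcal{I}'}$, and the smooth cutoff $\varphi(n_1^2 n_2/N_0)$. The factor $S(\overline{a}, n_2, q/n_1)$ is periodic in $n_2$ of period dividing $q/n_1$, and similarly for the primed version, so their product is periodic modulo $\operatorname{lcm}(q/n_1, q'/n_1)$, which divides $B$. This lets me treat $\mathcal{C}\overline{\mathcal{C}'}$ as a $B$-periodic function of $n_2$.

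Next, I would decompose the $n_2$-sum according to residue classes modulo $B$, writing $n_2 = r + Bk$ with $r \pmod{B}$ and $k \in \mathbb{Z}$. Applying Poisson summation to the inner $k$-sum against the smooth function $g(y) = \mathcal{I}(m, n_1^2 y, q) \overline{\mathcal{I}'}(m', n_1^2 y, q')\varphi(n_1^2 y/N_0)$ yields
\[
\sum_{k \in \mathbb{Z}} g(r+Bk) = \frac{1}{B} \sum_{n_2 \in \mathbb{Z}} \widehat{g}(n_2/B)\, e\!\left(\frac{n_2 r}{B}\right),
\]
after which summation over $r \pmod{B}$ collects the Kloosterman factors into the complete exponential sum $\mathfrak{C}$. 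A change of variables $y = N_0 w/n_1^2$ in $\widehat{g}(n_2/B) = \int g(y)\, e(-n_2 y/B)\, dy$ pulls out the factor $N_0/n_1^2$ and produces exactly the integral $\mathfrak{I}$ of \eqref{eq:I-frak-integral}. Assembling these pieces gives the claimed identity with $1/B$ from Poisson and $N_0/n_1^2$ from the rescaling, modulo the truncation of the dual $n_2$-range.

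The main obstacle is to justify the effective truncation $|n_2| \ll N_0/n_1^2$. I would establish it by integrating by parts repeatedly in $w$ on $\mathfrak{I}$, using the smoothness of $\varphi$ together with the stationary-phase description of $I(m, n_1^2 n_2, q, p)$ from \eqref{eq:Idefinition} to control the $w$-derivatives of $\mathcal{I}\overline{\mathcal{I}'}$. The key subtlety is that the two factors $\mathcal{I}$ and $\overline{\mathcal{I}'}$ carry oscillations whose phases partially cancel when $(q,m) \approx (q',m')$; one has to verify that this near-cancellation, combined with the exponential $e(-n_2 N_0 w/(B n_1^2))$, still forces negligibility once $|n_2|$ exceeds $N_0/n_1^2$. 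Outside this range the repeated integration by parts produces arbitrary polynomial decay in $|n_2|$, making those terms contribute an admissible error to $\Omega$.
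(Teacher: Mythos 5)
Your proposal takes essentially the same route as the paper's proof: split the $n_2$-sum into residue classes modulo $B=qq'/n_1^2$, apply Poisson summation to the integer sum, and rescale via $w = n_1^2(r+uB)/N_0$, which produces exactly the factor $N_0/(Bn_1^2)$, the complete character/Kloosterman sum $\mathfrak{C}$, and the integral $\mathfrak{I}$ of \eqref{eq:I-frak-integral}. Your additional discussion of the effective truncation of the dual variable is compatible with (indeed more explicit than) the paper, which simply relabels the dual variable in this lemma and defers the genuine range restriction to Lemma \ref{lem:Ibound}, where $\mathfrak{I}$ is shown to be negligible unless $n_2 \ll_\varepsilon N_1$.
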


\begin{proof}
Separating the $n_2$ sum in \eqref{eq:Omega} into residue classes $n_2 = r + \ell B$ modulo $B$, 
\begin{align}
\Omega & \ll \sum_{\substack{q, q' \sim C\\ n_1 \mid q, q'}} {{\sums_{\substack{a \mod q\\ a' \mod q'}}}}  \sum_{\substack{m, m' \ll M_0\\ {m \equiv ap \mod q}\\ {m' \equiv a'p \mod q'}}} \Xi   \sum_{r \pmod B} S({\overline{a}}, r, \tfrac{q}{n_1}) \overline{S}({\overline{a'}}, r, \tfrac{q'}{n_1}) \sum_{\ell \in \Z} \varphi\left( \frac{n_1^2(r + \ell B)}{N_0} \right) \mathcal{I} \overline{\mathcal{I}'}.
\end{align}
Recall that $\mathcal{I} = \mathcal{I}(m, n_1^2(r + \ell B), q, p)$ and $\mathcal{I}' = \mathcal{I}(m', n_1^2(r + \ell B), q', p)$. We now apply Poisson summation formula in the $\ell$-sum and write it in the form
\begin{equation}
\sum_{\ell\in\Z} f(r + \ell B) = \sum_{\ell \in\Z} \widetilde{f}(\ell),
\end{equation}

\noindent where
\begin{align}
\widetilde{f}(\ell) & =\int_\R \varphi \left(\frac{n_1^2(r + uB)}{N_0} \right) \mathcal{I}(m, n_1^2(r+ uB), q) \overline{\mathcal{I}}(m', n_1^2(r + uB), q') e(-\ell u) \d u, 
\end{align}
and with the change of variable $w = \frac{r + uB}{N_0}n_1^2$ we have
\begin{equation}
\widetilde{f}(\ell)= \frac{N_0}{B n_1^2} \int_\R \varphi(w) \mathcal{I}(m, N_0 w, q) \overline{\mathcal{I}}(m', N_0 w, q') e\left( \frac{\ell r}{B} - \frac{\ell N_0 w}{B n_1^2} \right) \d w.
\end{equation}

Thus we get \eqref{eq:OmegaBound} after relabelling $\ell$ by $n_2$. 
\end{proof}

\section{Final bounds}
\label{sec:bounds}

\subsection{Bounds on the integrals}
\label{sec:analytic}

We use the oscillations of the $\mathfrak{I}$-integral in order to get a bound on the $n_2$-range and to bound $\mathfrak{I}$. For that purpose we apply the stationary phase theorem to $\mathcal{I}$, thus obtaining the oscillation in $w$, and then using this information to apply stationary phase to $\mathfrak{I}$.

\begin{lemma}\label{lem:Ibound}
The integral $\mathfrak{I}$ is vanishingly small except in the range $n_2 \ll_\varepsilon N_1 := \tfrac{C}{K}$. Moreover we have $\mathfrak{I} \ll t^{-1}$ and, for $C \gg t^{\delta} \max\{\sqrt{NK}/t, \sqrt{N/Kt}\}$ with any fixed $\delta>0$, we have
\begin{equation} \label{eq:Ibound}
\mathfrak{I} \ll \frac{1}{t} \left(\frac{qq'}{N_0n_2}\right)^{{1/2}}.
\end{equation}

\end{lemma}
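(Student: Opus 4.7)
The plan is to apply the stationary phase method twice in succession: first to the inner $\xi$-integral inside each $\mathcal{I}$, which isolates the $w$-oscillation, and then to the resulting $w$-integral defining $\mathfrak{I}$. The phase of $\mathcal{I}(m, N_0 w, q)$ is $\phi_q(\xi; w) = -\tfrac{t}{2\pi}\log \xi - \tfrac{Nm\xi}{pq} + \tfrac{(NN_0 w(\xi+u))^{1/3}}{q}$. The stationary equation $\phi_q'(\xi)=0$ reads $\tfrac{t}{2\pi \xi_0} + \tfrac{Nm}{pq} = \tfrac{(NN_0 w)^{1/3}}{3q(\xi_0+u)^{2/3}}$ and, under the size assumption on $C$ and the bound $m \ll M_0$, admits a unique solution $\xi_0 = \xi_0(w;q,m) \asymp 1$ in the support of $U$ with $\phi_q''(\xi_0) \asymp t$. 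The standard stationary phase expansion gives $\mathcal{I}(m, N_0 w, q) = t^{-1/2} \beta_q(w)\, e(\phi_q(\xi_0(w);w)) + O(t^{-A})$ for an inert amplitude $\beta_q$. In particular $|\mathcal{I}| \ll t^{-1/2}$, whence $\mathfrak{I} \ll t^{-1}$ follows immediately from the boundedness of the $w$-range.

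To refine this, substitute the stationary phase expansion into both $\mathcal{I}$ and $\overline{\mathcal{I}'}$ to rewrite $\mathfrak{I} = t^{-1} \int_\R \varphi(w)\beta_q(w)\overline{\beta_{q'}(w)}\, e(\Psi(w))\,\mathrm{d}w + O(t^{-A})$ with combined phase $\Psi(w) = \phi_q(\xi_0(w;q,m);w) - \phi_{q'}(\xi_0(w;q',m');w) - \tfrac{N_0 n_2 w}{B n_1^2}$. By the envelope theorem, $\partial_w \phi_q|_{\xi_0}$ simplifies, using the stationary equation, to $(\xi_0+u)\bigl[\tfrac{t}{2\pi \xi_0} + \tfrac{Nm}{pq}\bigr]/w$, and analogously for $q'$. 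Setting $\Psi'(w)=0$ balances the resulting difference against $N_0 n_2/(qq')$; a routine size estimate exploiting $m,m' \ll M_0$ and the split into the two regimes of $C$ forces $n_2 \ll_\varepsilon C/K = N_1$, while for $n_2$ outside this range repeated integration by parts in $w$ renders $\mathfrak{I}$ negligible.

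For the sharper estimate in the regime $C \gg t^{\delta} \max\{\sqrt{NK}/t, \sqrt{N/Kt}\}$ I would apply stationary phase once more to the $w$-integral. Implicit differentiation of $\phi_q'(\xi_0)=0$ yields $\partial_w \xi_0$, and a careful second derivative computation gives $|\Psi''(w)| \asymp N_0 n_2/(qq')$ uniformly throughout this regime. The stationary phase lemma then contributes the factor $|\Psi''|^{-1/2} \asymp (qq'/(N_0 n_2))^{1/2}$, which combined with the $t^{-1}$ already extracted produces precisely the bound \eqref{eq:Ibound}.

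The main obstacle will be the bookkeeping in these derivative computations: the stationary points $\xi_0$ and $\xi_0'$ depend on $(q,m)$ and $(q',m')$ only implicitly, so the analysis of $\Psi'$ and $\Psi''$ must be pushed to the order at which the near-cancellation of the leading $t$-terms between the two conjugate stationary phase approximations becomes visible. The dichotomy in the size of $C$ already present in Lemma~\ref{lem:M0bound} resurfaces here to guarantee that the stationary phase hypotheses are satisfied uniformly, explaining both the extra hypothesis in the sharper bound and the appearance of the threshold $\sqrt{NK}/t$.
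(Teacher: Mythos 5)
Your overall strategy (stationary phase in $\xi$ inside each $\mathcal{I}$, then an analysis of the resulting $w$-integral) is the same as the paper's, but two of your key claims have genuine gaps. First, you obtain $\mathfrak{I}\ll t^{-1}$ from $|\mathcal{I}|\ll t^{-1/2}$, which you justify by a nondegenerate stationary point with $\phi_q''(\xi_0)\asymp t$ ``under the size assumption on $C$''. The lemma asserts $\mathfrak{I}\ll t^{-1}$ for \emph{all} moduli, and this is exactly the bound used later in the small-moduli range, where $B=(NN_0w)^{1/3}/q$ can be of size $t$; there the two terms of $\phi''(\xi)=\tfrac{t}{2\pi\xi^2}-\tfrac{2B}{9}(\xi+u)^{-5/3}$ can nearly cancel, the stationary point may be degenerate, and $|\mathcal{I}|\ll t^{-1/2}$ is no longer justified (a cubic-type point only gives $t^{-1/3}$). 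The paper covers precisely this case $B\asymp_\varepsilon t$ by a different device (Munshi's interchange of integration: the $w$-integral forces $|\xi_1-\xi_2|\ll 1/t$ in the double $\xi$-integral), which your argument is missing. Relatedly, the hypothesis $C\gg t^{\delta}\sqrt{N/Kt}$ is needed in the paper to make $Bu\ll t^{1/2-\delta}$ so that the $u$-perturbation $(\xi+u)^{1/3}$ can be absorbed into inert weights before stationary phase; you never address the $u$-dependence.

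The second and more serious gap is the assertion that ``a careful second derivative computation gives $|\Psi''(w)|\asymp N_0n_2/(qq')$ uniformly''. This cannot be true as stated: the term $-\gamma w$ with $\gamma=N_0n_2/(qq')$ is \emph{linear} in $w$ and contributes nothing to $\Psi''$. After the stationary phase in $\xi$, the combined phase is essentially $\tfrac{(NN_0)^{1/3}}{C}\,h\,w^{1/3}-\gamma w$ with $h=\tfrac{C}{q}(\tfrac{t}{2\pi A})^{1/3}-\tfrac{C}{q'}(\tfrac{t}{2\pi A'})^{1/3}$, so $\Psi''$ is proportional to $h$, which can be arbitrarily small and vanishes identically when $q=q'$, $m=m'$; in that case your stationary-phase step produces no saving at all. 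The paper gets the factor $(qq'/N_0n_2)^{1/2}$ by the change of variable $w\mapsto w^3$: the cube-root term becomes linear (hence killed by the second-derivative test) and the $\gamma w$ term becomes $\gamma w^3$, whose second derivative is $\asymp\gamma$ on the support of the weight, uniformly in $h$. Without this change of variable (or an equivalent case analysis splitting according to whether the stationary point lies in the support and using a first-derivative bound otherwise), the curvature claim at the heart of your derivation of \eqref{eq:Ibound} is unjustified, so the sharper bound does not follow from your argument as written. Your derivation of the truncation $n_2\ll_\varepsilon C/K$ is fine and matches the paper's reasoning on the $w$-oscillation.
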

\begin{proof}
We follow \cite[Section 4.2]{munshi_subconvexity_2018} and apply the stationary phase bound \cite[Main Theorem]{kiral2019oscillatory}. First let us study the oscillation in the  $I(m,N_0w,q,p)$  integral. Recall that
\begin{equation} 
\label{eq:integral-I-reminder}
I(m,N_0w,q,p) = \int_{\R} U(\xi) e\left(-\frac{t}{2\pi}\log \xi + A\xi + B(\xi + u)^{1/3}\right) \d\xi
\end{equation}
where $A = - Nm/pq$ and $B = (NN_0w)^{1/3}/q$.

Except for when $B \asymp_\varepsilon t$, one can deduce $I\ll_\varepsilon 1/\sqrt{t}$ from the second derivative bound on~\eqref{eq:integral-I-reminder}. But if $B\asymp_\varepsilon t$, we can also apply the argument in \cite[Lemma 1]{munshi_subconvexity_2018}:
\[
	\mathfrak{I} \leq \int_0^\infty \int_0^\infty U(\xi_1) U(\xi_2) \left| \int_\R \varphi(w^3)3w^2 e\left( Bw((\xi_1 + u)^{1/3} - (\xi_2 + u)^{1/3}) \right) \d w\right| \d \xi_1 \d \xi_2.
\]
When $B\asymp_\varepsilon t$, repeated integration by parts gives $|\xi_1 -\xi_2| \ll 1/t$ on the inner integral. This proves the $\mathfrak{I}\ll t^{-1}$ bound. {We also have $\mathfrak{I}$ essentially supported in $n_2 \ll N_1$. This is because if we open up $\mathcal{I}$ and $\mathcal{I}'$ and focus on the oscillation in the $w$-variable we get
\begin{equation}
e\left( \frac{(NN_0(\xi + u))^{1/3}}{q}w^{1/3} + \frac{(NN_0(\xi' + u))^{1/3}}{q'}w^{1/3} - \frac{n_2N_0}{qq'} w\right).
\end{equation}
The stationary point of this oscillation is unbounded if $n_2N_0/qq' \gg_\varepsilon (NN_0)^{1/3}/C$ so that $\mathfrak{I}$ is negligible except when 
\begin{equation}
n_2 \ll_\varepsilon \frac{N^{1/3}C}{N_0^{2/3}} = \frac{C}{K} = N_1.
\end{equation}
}

From now on assume that $C\gg t^\delta \sqrt{NK}/t$, thus $B \ll t^{1-\delta}$. Because we are in this large modulus regime, from Lemma \ref{lem:M0bound}, we have that $|m|\asymp_\varepsilon M_0$, and thus $A \asymp_\varepsilon t$.

Secondly note that since $|u|\ll_\varepsilon 1/K$ via \eqref{eq:usize} we can appeal to the binomial expansion $(\xi + u)^{1/3} =  \xi^{1/3} +  \xi^{1/3} \sum_{n=1}^\infty   (1/3)_n (u/\xi)^n/n!$ where $(\alpha)_n = \alpha  (\alpha-1) \cdots (\alpha - n+1)$ is the falling factorial. All the terms corresponding to $n \geqslant 1$ are bounded by $u$, hence the contribution of those terms to the whole phase inside \eqref{eq:integral-I-reminder} is bounded by
\begin{equation}
Bu \ll_\varepsilon \frac{B}{K} \ll_\varepsilon  t^{1/2 - \delta}.
\end{equation}
Here we used that $C \gg t^{\delta} \sqrt{N/Kt}$.
Adding these oscillations to $U$ we get a set of $t^{1/2-\delta} $-inert weight functions $U'$. Since $t^{1/2 - \delta} \ll \sqrt{t} \asymp \sqrt{\phi''(\xi)}$ as we see below in~\eqref{eq:phidoubleprimebound}. By the \emph{Main Theorem} \cite{kiral2019oscillatory}, we may afford to remove all the $u$ from the computations.

From the bound $m \asymp_\varepsilon M_0$ obtained from Lemma \ref{lem:M0bound}, we have 
%\didier{We should say a word about the fact that we restrict $m \asymp M_0$ to have $A \asymp t$, i.e. $m$ is of maximal size. But for smaller $m$'s, it amounts to reduce the size of $M_0$, and if you look at our final bounds $M_0$ is always with positive powers so we may have already good enough bounds.}
\begin{equation}
A = \frac{N|m|}{pq}  \asymp_\varepsilon t.
\end{equation}

\noindent In particular, we deduce that $B/A \to 0$ when $t \to +\infty$. For each $t$, the stationary point in~\eqref{eq:integral-I-reminder} is $\xi = \xi_t$ such that
\begin{equation}
\phi'(\xi_t) =  - \frac{t}{2\pi \xi_t} + A + \frac{B}{3}\xi_t^{-2/3} = 0.
\end{equation}
We can also write it as $\xi + (B/3A) \xi^{1/3} - t/2\pi A = 0$, and view this as a perturbed cubic equation, with $\epsilon := B/3A$. Note that $\epsilon \to 0$ as $t \to \infty$. We assume the solution is a power series $\xi = \eta_0 + \eta_1 \epsilon + \eta_2 \epsilon^2+ \cdots$ and solve for the coefficients $\eta_i$. We get that
\begin{equation}
	\xi_t= \frac{t}{2\pi A} - \left(\frac{t}{2\pi A}\right)^{1/3} \epsilon + O(\epsilon^{2}).
\end{equation}

We can apply the stationary phase method to obtain a description of $I(m, N_0w, q)$ with explicit phase and modulus. The modulus is controlled by the second derivative of the phase, given by
\begin{equation}\label{eq:phidoubleprimebound}
\phi''(\xi_t) = \frac{t}{2\pi \xi_t^2} - \frac{2B}{9}\xi_t^{-5/3} \asymp t, 
\end{equation}
since $B=o(t)$. The phase is given by $\phi(\xi_t)$, and expanding the cube root term binomially we get
\begin{equation}
e(\phi(\xi_t)) = \xi_t^{-it} e\left( \frac{t}{2\pi} + \frac{2B}{3}\left(\frac{t}{2\pi A}\right)^{1/3} + O\left( \frac{B^2}{A} \right) \right).
\end{equation}
The stationary phase method hence gives
\begin{equation}
I(m, N_0w, q) \approx \frac{1}{t^{1/2}} \left(\frac{t}{2\pi A}\right)^{-it} e\left(\frac{t}{2\pi} + B \left(\frac{t}{2\pi A}\right)^{1/3} +  O\left( \frac{B^2}{A} \right) \right),
\end{equation}
up to an error term of arbitrary polynomial decay in $pt$.
The $w$ dependence is only in $B$ and pulling the $w$-integral inside, we measure how much we can gain from stationary phase.
Note that we have used $\xi_t^{-it} = \left(\frac{t}{2\pi A}\right)^{-it}e(-\frac{t}{2\pi} \log(1 - (\frac{t}{2\pi A})^{-2/3}\epsilon + O(B^2/A^2)))$.

Denote $\gamma = \tfrac{N_0 n_2}{qq'}$. The inner $w$ integral in \eqref{eq:I-frak-integral} is rewritten as
\begin{equation}
\label{eq:I-frak-unwound}
\mathfrak{I} \approx \frac{1}{t}\int_{\mathbb{R}} \varphi(w) e\left( \frac{(NN_0)^{1/3}}{C}h w^{1/3} + O\left( \frac{B^2}{A} \right) - \gamma w \right) \mathrm{d}w
\end{equation}
where, letting $A' = \tfrac{Nm'}{pq'}$, we defined $h = \frac{C}{q}(\tfrac{t}{2\pi A})^{1/3} - \frac{C}{q'}(\tfrac{t}{2\pi A'})^{1/3}$ which does not depend on $w$ and $h\ll 1$.
We can attain the bound~\eqref{eq:Ibound} by applying the second derivative bound to~\eqref{eq:I-frak-unwound}. Changing variable $w \mapsto w^3$, we have 
\begin{equation}
\label{eq:I-frak-unwound-2}
\mathfrak{I} \approx \frac{1}{t}\int_{\mathbb{R}} 3w^2 \varphi(w^2) e\left( \frac{(NN_0)^{1/3}}{C}hw  + O\left( \frac{B^2}{A} \right) - \gamma w^3 \right) \mathrm{d}w.
\end{equation}
Note that the $w$ dependence in the big-oh term is a power series in $w$ starting from $w^2$. Also since $B/A\ll t^{-\delta}$ the big-oh term is smaller than the first term. Thus, the stationary point of this oscillatory integral is in the support of $\varphi$ only if
\begin{equation}
	\frac{(NN_0)^{1/3}}{C} \asymp_\varepsilon \gamma,
\end{equation}
which translates to $n_2 \asymp_\varepsilon N_1$. By the stationary phase method $\mathfrak{I}$ is negligibly small otherwise. The first term in the exponential is killed in the second derivative and thus the size of the second derivative is $\gamma$. Which means the stationary phase method saves a factor of $\gamma^{-1/2}$.
\end{proof}

\subsection{Bound on Kloosterman sums}
\label{sec:arithmetic}

We now bound $\mathfrak{C}$. These sums have been treated in a previous paper of Munshi \cite[Lemma 11]{munshi_circle_2014} and bounded explicitly in an elementary way. Precisely we have the following: for $n_2 = 0$ we have $\mathfrak{C} = 0$ unless $q=q'$ and then 
\begin{equation} \label{eq:CzeroBound}
\mathfrak{C} \ll B (\tfrac{q}{n_1}, a-a').
\end{equation}
If $n_2 \neq 0$, we have
\begin{equation} \label{CNonzeroBound}
\mathfrak{C} \ll B (\tfrac{q}{n_1}, \tfrac{q'}{n_1}, n_2).
\end{equation}

\subsection{Bounding $\Omega$} \label{subsec:BoundingOmegaa}

Let us write 
\begin{equation}\label{eq:OmegaDecomposition}
	\Omega = \Omega_{0=} + \Omega_{0\neq} + \Omega_{\text{non}}.
\end{equation}
where the terms on the right correspond respectively to the subsum of  \eqref{eq:OmegaBound} corresponding to the indices $n_2=0$ and $a=a'$, $n_2 = 0$ and $a \neq a'$, and finally to nonzero $n_2 \neq 0$. We also put $\Omega_0 = \Omega_{0=} + \Omega_{0\neq}$ accordingly. Since $\Omega^{1/2} \ll \Omega_{0=}^{1/2} + \Omega_{0\neq}^{1/2} + \Omega_{\text{non}}^{1/2}$ we may split $\Smainprime$ in~\eqref{eq:SmainPrimeCauchySchwartz} accordingly, to obtain $\Smainprime \ll S_{\text{main},0=}' + S_{\text{main},0\neq}' + S_{\text{main}, \text{non}}$, where
\begin{equation}
	S_{\text{main}, \square}' = \sum_{n_1\ll C} n_1^{1/3} \Theta^{1/2} \Omega_{\square}^{1/2}.
\end{equation}

We will treat each case separately. The following lemma will be used to show that all the~$n_1$ sums appearing are uniformly bounded. 
\begin{lemma}
\label{lem:n1-sum}
	Let $\alpha \geq 7/6$ be an exponent. Then 
	\begin{equation}
		\sum_{n_1 \ll C} \frac{\Theta^{\frac{1}{2}}}{n_1^\alpha}\ll_\varepsilon N_0^{1/6}.
	\end{equation}
\end{lemma}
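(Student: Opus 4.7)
The plan is to apply Cauchy--Schwarz to the $n_1$-sum in order to separate $\Theta^{1/2}$ from the harmonic weight $n_1^{-\alpha}$, and then exploit the Ramanujan bound on average \cite[(2.6)]{li_bounds_2011} on the remaining second-moment sum.

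More precisely, I would write, for a parameter $\beta$ to be chosen,
\begin{equation}
\left( \sum_{n_1 \ll C} \frac{\Theta^{1/2}}{n_1^\alpha} \right)^2 \leq \left( \sum_{n_1 \ll C} \frac{1}{n_1^{2\alpha - \beta}} \right) \left( \sum_{n_1 \ll C} \frac{\Theta}{n_1^\beta} \right).
\end{equation}
The first factor is $\ll_\varepsilon 1$ as soon as $2\alpha - \beta \geq 1$. For the second factor, unfolding the definition of $\Theta$ and rewriting $n_1^{-\beta} n_2^{-2/3} = n_1^{4/3 - \beta} (n_1^2 n_2)^{-2/3}$ gives
\begin{equation}
\sum_{n_1 \ll C} \frac{\Theta}{n_1^\beta} = \sum_{\substack{n_1 \ll C \\ n_1^2 n_2 \ll N_0}} \frac{|\lambda(n_2,n_1)|^2 \, n_1^{4/3-\beta}}{(n_1^2 n_2)^{2/3}}.
\end{equation}
Choosing $\beta \geq 4/3$ makes the extra factor $n_1^{4/3-\beta} \leq 1$, so the right-hand side is bounded by the unrestricted sum $\sum_{n_1^2 n_2 \ll N_0} |\lambda(n_2,n_1)|^2 (n_1^2 n_2)^{-2/3}$. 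Combining the bound $\sum_{n_1^2 n_2 \leq X} |\lambda(n_2,n_1)|^2 \ll_\varepsilon X^{1+\varepsilon}$ of \cite[(2.6)]{li_bounds_2011} with partial summation then yields
\begin{equation}
\sum_{n_1 \ll C} \frac{\Theta}{n_1^\beta} \ll_\varepsilon N_0^{1/3}.
\end{equation}

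Finally I would balance the two constraints $2\alpha - \beta \geq 1$ and $\beta \geq 4/3$. Taking $\beta = 4/3$ is admissible precisely when $\alpha \geq 7/6$, which matches the hypothesis of the lemma and produces $\sum_{n_1 \ll C} n_1^{-(2\alpha-\beta)} \ll \log C \ll_\varepsilon 1$ in the borderline case; for $\alpha > 7/6$ one can take $\beta = 2\alpha - 1 \geq 4/3$. Plugging these estimates back yields $(\sum_{n_1} \Theta^{1/2}/n_1^\alpha)^2 \ll_\varepsilon N_0^{1/3}$, hence the claim. The only mildly delicate point is verifying that the borderline case $\alpha = 7/6$ still goes through, but there the logarithmic divergence is absorbed into the $\ll_\varepsilon$ convention, so no additional argument is needed.
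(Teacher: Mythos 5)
Your proposal is correct and is essentially the paper's own argument: the paper applies Cauchy--Schwarz with the splitting $n_1^{-\alpha} = n_1^{-2/3}\, n_1^{-(\alpha-2/3)}$, which is exactly your choice $\beta = 4/3$, and then bounds the resulting second-moment sum $\sum_{n_1^2 n_2 \ll N_0} |\lambda(n_2,n_1)|^2 (n_1^2 n_2)^{-2/3} \ll_\varepsilon N_0^{1/3}$ by partial summation against the Ramanujan-on-average bound, just as you do. The only cosmetic difference is your extra parameter $\beta$, which collapses to the same borderline case $\alpha = 7/6$, $\beta = 4/3$ handled identically via the $\ll_\varepsilon$ convention.
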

\begin{proof}
	We apply Cauchy-Schwarz inequality in the $n_1$ sum, separating $n_1^{-\alpha} = n_1^{-2/3} n_1^{-\alpha + 2/3}$. Note that for $\alpha = 7/6$ the second factor is $n_1^{-1/2}$,
\begin{equation}
\sum_{n_1 \ll C} \frac{\Theta^{\frac{1}{2}}}{n_1^\alpha} \ll \left(\sum_{n_1\ll C} \frac{1}{n_1^{2\alpha - 4/3}} \right)^{1/2}  \left(\sum_{n_1^2 n_2\ll N_0} \frac{|\lambda(n_2,n_1)|^2}{(n_1^2n_2)^{2/3}}\right)^{1/2}.
\end{equation}
The first factor is either a part of a convergent sum if $\alpha > 7/6$ or is the harmonic sum when we have $\alpha = 7/6$ and can be bounded by $C^\varepsilon \ll Q^\varepsilon \ll_\varepsilon 1$.

For the second factor we apply summation by parts. Put
\begin{equation}
A(n) = \sum_{n_1^2n_2 \leq n} \frac{|\lambda(n_2, n_1)|^2}{n_1^2 n_2}.
\end{equation}
By the Ramanujan bound on average, we have that, $A(n) \ll n^\varepsilon.$ Now the second factor can be rewritten using integration by parts as
\begin{equation}
	A(N_0) N_0^{1/3} +  \sum_{n=1}^{N_0-1} A(n) (n^{1/3} - (n+1)^{1/3}) \ll N_0^{\frac{1}{3} + \varepsilon} + \sum_{n = 1}^{N_0-1} \frac{n^{\varepsilon} }{n^{2/3}} \ll_\varepsilon N_0^{\frac{1}{3}}.
\end{equation}
This proves the result.
\end{proof}

First consider the zero frequency case $n_2=0$. The value of $\mathfrak{C}$ is nonzero only if $q=q'$ by the above. Inputting the bound \eqref{eq:CzeroBound} on $\mathfrak{C}$, thus bounding the arithmetic sum $|\Xi|$ by $1/q^2$ and the integral $\mathfrak{I}$ by \eqref{eq:Ibound} we get 
\begin{equation} \label{eq:OmegaZeroBound}
\Omega_0  \ll \frac{N_0}{n_1^2 t}  \sum_{\substack{q \sim C\\ n_1|q}} \frac{1}{q^2} {{\sums_{\substack{a \mod q \\ a' \mod q'}}}} \sum_{\substack{m, m' \ll M_0 \\ {m \equiv ap \mod q} \\ {m' \equiv a'p \mod q'}}}  \left(\frac{q}{n_1}, a-a'\right) .
\end{equation}

\subsubsection{Zero frequency, $a=a'$ case}
 \label{sec:zero-frequency}

Note that $M_0 < q$ in both the small and the large $q$ regimes. We have this bound since the case $N< (pt)^{4/3}$ need not be considered as $S(N)$ can be bounded from the onset via \eqref{eq:SNdefinition} succesfully. Therefore when $a=a'$, we have that $m \equiv m' \mod q$ and this implies $m=m'$. Moreover we have the equality $(\tfrac{q}{n_1}, a-a')~= \tfrac{q}{n_1}$ when $a = a'$. Altogether we can bound
\begin{equation}\label{eq:OmegaZeroEqBound}
\Omega_{0=} \ll \frac{N_0}{n_1^2 t} \sum_{\substack{q\sim C\\ n_1|q}} \frac{1}{qn_1} {{\sums_{\substack{a \mod q}}}} \ \sum_{\substack{m\ll M_0 \\ {m \equiv ap \mod q} }} 1  \ll_\varepsilon \frac{N_0}{t n_1^4} M_0 .
\end{equation}
Assume $C \gg t^\delta \sqrt{NK}/t$. In this case $M_0 \asymp_\varepsilon \frac{Cpt}{N} $. This means that \eqref{eq:OmegaZeroEqBound} can be bounded by $ \ll_\varepsilon \frac{Kp}{n_1^4}$. In the small $q$ regime, as $M_0 \asymp_\varepsilon \frac{p \sqrt{K}}{\sqrt{N}}$ the bound is $\ll_\varepsilon \frac{K^2p}{n_1^4 t}$, which is strictly less than the first case as $K \ll t^{1- \varepsilon}$.

After inputing the $\Omega_{0=}^{1/2} \ll_\varepsilon K^{1/2} p^{1/2}/n_1^2$ bound in \eqref{eq:SmainPrimeCauchySchwartz}, applying Lemma \ref{lem:n1-sum} to deal with the $n_1$ sum, and using~\eqref{SmainBound}, we obtain that the corresponding contribution in $S_\mathrm{main}$ is
\begin{equation}
\label{eq:contribution-0equal}
S_{\mathrm{main}, 0=} \ll_\varepsilon \frac{N^{2/3}N_0^{1/6} K^{1/2}p^{1/2}}{p^{1/2}} \ll_\varepsilon K^{3/4}N^{3/4} \ll_\varepsilon N^{1/2}(pt)^{3/4} \cdot \frac{K^{3/4}}{(pt)^{3/8}}.
\end{equation}

\subsubsection{Zero frequency, $a \neq a'$ case}

When $a=a'$ the expected bound on average is given by $(\tfrac{q}{n_1}, a-a') \ll 1$. More precisely,
\begin{equation}
{\sums_{\substack{a \mod q \\ a' \mod q'}}} (q, a-a') \leq \sum_{d | q} \sum_{\substack{0< a, a' \leq q \\ d| a-a'}} d \ll \sum_{d| q} q \frac{q}{d} d \ll q^{2+\varepsilon}.
\end{equation}
Therefore, 
\begin{equation}\label{eq:OmegaZeroNeqBound}
\Omega_{0\neq}  \ll_\varepsilon \frac{N_0}{n_1^2t} \sum_{\substack{q \sim C\\ n_1|q}} \frac{1}{q^2} \sums_{a ,a' \mod q } \sum_{\substack{m, m' \ll M_0 \\ {m \equiv m'\equiv ap \mod q} \\ }}  \left(\frac{q}{n_1}, a-a'\right)  \ll_\varepsilon \frac{N_0}{n_1^3t} \frac{1}{C}  M_0^2 .
\end{equation}
In the large $q$ regime plugging in $M_0 \asymp pCt/N$ yields $\Omega_{0\neq} \ll_\varepsilon \frac{p^2 tK}{Nn_1^3}$. For the small $q$ regime, we have the bound $\Omega_{0\neq} \ll_\varepsilon\frac{K^{5/2}p^3}{N^{3/2} n_1^3}$ for the same quantity.
For the large $q$ the corresponding contribution in $S_{\mathrm{main}}$ is 
\begin{equation}
\label{eq:contribution-diff}
S_{\mathrm{main}, 0\neq} \ll_\varepsilon \frac{N^{2/3}N_0^{1/6}pt^{1/2}K^{1/2}}{p^{1/2}N^{1/2}} \ll_\varepsilon N^{1/4} K^{3/4} (pt)^{1/2} \ll_\varepsilon N^{1/2}(pt)^{3/4} \cdot \frac{K^{3/4}}{N^{1/4}(pt)^{1/4}}.
\end{equation}
Notice that for $N> (pt)$, which is all we have to consider, the contribution from \eqref{eq:contribution-0equal} dominates the contribution from \eqref{eq:contribution-diff}.
A similar calculation for the small $q$ regime yields,
\begin{equation}
	S_{\text{main},0\neq} \ll_\varepsilon pK^{3/2} \ll N^{1/2} (pt)^{3/4} \frac{K^{3/4}}{(pt)^{3/8}} \frac{p}{N^{1/2}}.
\end{equation}
We can compare this with \eqref{eq:contribution-0equal} and note that for $N \gg (pt)^{4/3}$ and $p \leq t^2$ it is smaller. 

\subsubsection{Non-zero frequencies: small moduli}
For large values of the modulus $C$, Lemma \ref{lem:Ibound} provides a strong bound for the integral $\mathfrak{I}$, and the corresponding terms will be treated in the next section. Here, we treat the case of small moduli and therefore assume $C \ll t^\delta \sqrt{NK}/t$  or $C \ll t^{\delta} \sqrt{N/Kt}$.

We use the bound $\mathfrak{I} \ll t^{-1}$ from Lemma \ref{lem:Ibound} and the bound on $\mathfrak{C}$ given in \eqref{CNonzeroBound} so that 
\begin{equation}
\Omega_{\text{non}} \ll \frac{N_0}{tn_1^2} \sum_{\substack{n_2 \neq 0 \\ n_2 \ll N_1}} \sum_{\substack{q, q'\sim C \\ n_1\mid q, q'}} \frac{1}{qq'} {{\sums_{\substack{a \mod q \\ a' \mod q'}}}} \sum_{\substack{m, m' \ll M_0 \\ {m \equiv ap \mod q} \\ {m' \equiv a'p \mod q'}}}  \left(\frac{q}{n_1}, \frac{q'}{n_1}, n_2\right).
\end{equation}
Note that, for real numbers $0\leq \alpha, \beta, \gamma \leq 1$,
\begin{equation} 
\sum_{\substack{{a \leqslant X} \\ b \leqslant Y \\ c \leqslant Z}} \frac{(a,b,c)}{a^\alpha b^\beta c^\gamma} 
 \ll \sum_{d \leq \min\{X,Y,Z\}} \frac{1}{d^{\alpha+\beta+\gamma-1}} \sum_{\substack{{a \leq X/d} \\ b\leq Y/d \\ c\leq Z/d}} \ \frac{1}{a^\alpha b^\beta c^\gamma}  
 \ll X^{1-\alpha}Y^{1-\beta}Z^{1-\gamma} (XYZ)^{\varepsilon}. \label{gcd-bounds}
\end{equation}

\noindent We therefore have
\begin{equation}\label{eq:OmegaNonBoundSmallModuli}
\Omega_{\text{non}} \ll_\varepsilon \frac{N_0}{n_1^4 t} N_1 M_0^2  \ll_\varepsilon     \frac{N^{1/2}K^{3/2} }{n_1^4t} \frac{C}{K} \frac{p^2K}{N}.
\end{equation}
Plugging in the $C\ll t^\delta \sqrt{NK}/t$ bound we get that $\Omega_{\text{non}} \ll_\varepsilon  \frac{K^2p^2}{t^2n_1^4}$. Plugging in  the bound $C \ll t^\delta \sqrt{N/Kt}$ we have $\Omega_{\text{non}} \ll_\varepsilon \frac{K p^2}{t^{3/2}n_1^4}$.
The corresponding $S_{\mathrm{main}}$ contribution is.
\begin{equation}
\label{eq:contribution-non-small}
S_{\mathrm{main, non}} \ll_\varepsilon N^{1/2}(pt)^{3/4} \cdot \max\left\{ \frac{p^{1/8}  K^{5/4}}{t^{11/8}}, \frac{K^{3/4}}{(pt)^{3/8}} \frac{p^{1/2}}{t^{3/4}} \right\}.
\end{equation}
When compared with \eqref{eq:contribution-0equal} the second term in the maximum can be ignored+ $p < t^{3/2}$.

\subsubsection{Non-zero frequencies: large moduli}

In this section we assume $C \gg t^\delta \sqrt{NK}/t$.
We use the bounds \eqref{eq:Ibound} on $\mathfrak{I}$ from Lemma \ref{lem:Ibound} and \eqref{CNonzeroBound} on $\mathfrak{C}$ so that 
\begin{equation}
\Omega_{\text{non}} \ll \frac{N_0}{tn_1^2} \sum_{\substack{n_2 \neq 0 \\ n_2 \ll N_1}} \sum_{\substack{q, q'\sim C \\ n_1\mid q, q'}} \frac{1}{qq'} {{\sums_{\substack{a \mod q \\ a' \mod q'}}}} \sum_{\substack{m, m' \ll M_0 \\ {m \equiv ap \mod q} \\ {m' \equiv a'p \mod q'}}}  \left(\frac{q}{n_1}, \frac{q'}{n_1}, n_2\right) \left( \frac{qq'}{N_0n_2} \right)^{1/{2}}.
\end{equation}

Using \eqref{gcd-bounds} we have
\begin{equation}\label{eq:OmegaNonBoundLargeModuli}
\Omega_{\text{non}} \ll \frac{N_0^{1/2}N_1^{1/2}CM_0^2}{tn_1^4} \ll_\varepsilon \frac{p^2 t}{K^{3/2} n_1^4}.
\end{equation}
After inputing this bound in \eqref{eq:SmainPrimeCauchySchwartz}, applying Lemma \ref{lem:n1-sum} to deal with the $n_1$ sum, and using~\eqref{SmainBound}, we obtain that the corresponding contribution in $S_{\mathrm{main}}$ is
\begin{equation}
\label{eq:contribution-non}
S_{\mathrm{main, non}} \ll_\varepsilon \frac{N^{2/3}N_0^{1/6}p^{1/2}t^{1/2}}{K^{3/4}}  \ll_\varepsilon N^{1/2}(pt)^{3/4} \cdot \frac{(pt)^{1/8}}{K^{1/2}}.
\end{equation}

\subsection{Subconvexity bound}
\label{subsec:subconvexity}

We optimize the above bounds in $K$.  Equating the contribution from the diagonal zero frequency \eqref{eq:contribution-0equal} and the one from the nonzero frequency with large moduli \eqref{eq:contribution-non}, we obtain $K=(pt)^{2/5}$ and the corresponding contributions to $S_{\mathrm{main}}$ are bounded by
\begin{equation}
\label{eq:subconvex-bound}
N^{1/2}(pt)^{3/4} (pt)^{-3/40}.
\end{equation}
For the same $K$ value, we obtain $\Scomp\ll_\varepsilon N^{3/4} \ll_\varepsilon N^{1/2} (pt)^{3/4} (pt)^{-3/8}$ from \eqref{a}. In short, the complementary range is not the bottleneck.

Concerning the contribution of small moduli \eqref{eq:contribution-non-small}, with the choice $K=(pt)^{2/5}$ it is bounded by
\begin{equation}
\label{eq:extended-range-bound}
N^{1/2}(pt)^{3/4} \ \max\left\{ \frac{p^{5/8}}{t^{7/8}}, \frac{p^{17/40}}{t^{33/40}} \right\}
\end{equation}
and, in the range $p<t^{8/7}$, it also satisfies the same bound \eqref{eq:subconvex-bound}. Altogether we have
\begin{equation}
S(N) \ll_\varepsilon N^{1/2}(pt)^{3/4} (pt)^{-3/40}, 
\end{equation}
completing the proof of Theorem \ref{theorem}.

\smallskip

\noindent\textit{Remark.} We can allow for the larger range $p<t^{15/13}$, for which the bound \eqref{eq:extended-range-bound} is still subconvex.

\bibliographystyle{acm}
\bibliography{GL3subconvexity}

\end{document}